\numberwithin{figure}{section}
\numberwithin{equation}{section}
\theoremstyle{plain}
\newtheorem{theorem}{Theorem}[section]
\newtheorem{hypothesis}{Hypothesis}[section]
\newtheorem{lemma}[theorem]{Lemma}
\newtheorem{proposition}[theorem]{Proposition}
\newtheorem{definition}[theorem]{Definition}
\theoremstyle{remark}
\newtheorem{remark}[theorem]{Remark}
\newcommand\frk{\mathfrak{k}}
\newcommand\frh{\mathfrak{h}}
\newcommand\rmO{\mathrm{O}}
\newcommand\uhr{\upharpoonright}
\newcommand\dd{{\mathsf{d}}}
\newcommand\sfH{\mathsf{H}}
\newcommand\sfK{\mathsf{K}}
\newcommand\lm{{\lambda}}
\newcommand\arr{\rightarrow}
\newcommand\cB{\mathcal B}
\newcommand\cG{\mathcal G}
\newcommand\cH{\mathcal H}
\newcommand\cK{\mathcal K}
\newcommand\CC{\mathbb C}
\newcommand\NN{\mathbb N}
\newcommand\dC{{\mathbb C}}
\newcommand\dN{{\mathbb N}}
\newcommand\dR{{\mathbb R}}
\newcommand\cU{{\mathcal U}}
\newcommand\sA{{\mathfrak A}}
\newcommand\sS{{\mathfrak S}}
\newcommand\frA{\mathfrak A}
\newcommand\frS{\mathfrak S}
\newcommand\ov{\overline}
\newcommand\wt{\widetilde}
\newcommand\wh{\widehat}
\newcommand{\defeq}{\mathrel{\mathop:}=}
\DeclareMathOperator\Tr{Tr}
\DeclareMathOperator\dom{dom}
\DeclareMathOperator\ran{ran}
\newcommand\Op{\sfH_{\alpha,\Sigma}}
\newcommand\Opp{\sfK_{\omega,\Sigma}}
\newcommand\Opfree{\sfH_{\rm free}}
\newcommand\OpN{\sfK_{\rm N}}
\newcommand\AB{A_{[B]}}
\newcounter{counter_a}
\newenvironment{myenum}{\begin{list}{{\rm(\roman{counter_a})}}%
{\usecounter{counter_a}
\setlength{\itemsep}{1.ex}\setlength{\topsep}{0.8ex}
\setlength{\leftmargin}{5ex}\setlength{\labelwidth}{5ex}}}{\end{list}}
\title[Trace formulae for Schr\"odinger operators
with singular interactions]{Trace formulae for Schr\"odinger operators with \\
singular interactions}
\author[Jussi Behrndt, Matthias Langer and Vladimir Lotoreichik]{Jussi Behrndt,
Matthias Langer and Vladimir Lotoreichik\thanks{JB gratefully acknowledges financial support
by the Austrian Science Fund (FWF): Project P~25162-N26.
VL gratefully acknowledges financial support by the Czech Science Foundation
(GA\v{C}R): Project 14-06818S.}}
\begin{document}

\begin{center}
\textit{Dedicated with great pleasure to our teacher, colleague and friend \\
Pavel Exner on the occasion of his 70th birthday.}
\end{center}

\begin{abstract}
Let $\Sigma\subset\dR^d$ be a $C^\infty$-smooth
closed compact hypersurface, which
splits the Euclidean space $\dR^d$ into two domains $\Omega_\pm$.
In this note self-adjoint Schr\"odinger operators
with $\delta$ and $\delta'$-interactions supported on $\Sigma$ are studied.
For large enough $m\in\dN$ the difference of
$m$th powers of resolvents of such a Schr\"odinger operator and the free Laplacian
is known to belong to the trace class.
We prove trace formulae, in which the trace of the resolvent power
difference in $L^2(\dR^d)$ is written
in terms of Neumann-to-Dirichlet maps on the boundary space $L^2(\Sigma)$.
\end{abstract}

\begin{classification}
Primary 35P20; Secondary 35J10, 35P25, 47B10, 47F05, 81U99.
\end{classification}

\begin{keywords}
Trace formula, delta interaction, Schr\"odinger operator, singular potential.
\end{keywords}

\maketitle

%**************************************************************
\section{Introduction}
%**************************************************************

This paper is strongly inspired by the work of Pavel Exner on
Schr\"odinger operators with singular interactions
of $\delta$ and $\delta'$-type supported on hypersurfaces in $\dR^d$.
Such operators play an important role in mathematical physics,
for instance, in nuclear physics or solid state physics
or in connection with photonic crystals or other nanostructures.
In the case of a curve in $\dR^2$ such models are also called ``leaky quantum wires''.
The first rigourous investigations of such operators started in the late 1980s
(see, e.g.\ \cite{AGS87, BEKS94, BT92}), and the interest in these operators
grew steadily in the last two decades.
We refer the reader to the review paper \cite{E08}, the monograph \cite{EK15},
the references therein and also to the more recent papers
\cite{BEL14_RMP, BGLL15, BLL13_AHP, DEKP15, DR14, EJ13, JL15, LO15}.

Let $\Sigma\subset\dR^d$, where $d\ge2$, be a $C^\infty$-smooth closed compact
hypersurface without boundary, which naturally splits the Euclidean space $\dR^d$
into a bounded domain $\Omega_-$ and an exterior domain $\Omega_+$.
Moreover, let $\alpha,\omega\in L^\infty(\Sigma)$ be real-valued functions.
The Schr\"odinger operator $\Op$ with $\delta$-interaction of strength $\alpha$
and the Schr\"odinger operator $\Opp$ with $\delta'$-interaction of strength $\omega$
are formally given by
\begin{equation}\label{formal_operators}
	-\Delta - \alpha\delta(x-\Sigma)
	\qquad\text{and}\qquad
	-\Delta - \omega\delta'(x-\Sigma).
\end{equation}
We define these operators rigorously via quadratic forms;
see Definition~\ref{def:Ops} below.
Let us first fix some notation.
Since the space $L^2(\dR^d)$ naturally decomposes as
$L^2(\dR^d) = L^2(\Omega_+)\oplus L^2(\Omega_-)$,
we can write functions $u\in L^2(\dR^d)$ as $u = u_+ \oplus u_-$
with $u_\pm = u\uhr\Omega_\pm\in L^2(\Omega_\pm)$.
The $L^2$-based Sobolev spaces of order $s \ge 0$ over $\dR^d$ and $\Omega_\pm$
are denoted by $H^s(\dR^d)$ and $H^s(\Omega_\pm)$, respectively.
Note that the hypersurface $\Sigma$ coincides with the boundaries $\partial\Omega_\pm$
of the domains $\Omega_\pm$.
Hence, for any $u\in H^1(\dR^d)$ and $u_\pm \in H^1(\Omega_\pm)$ the \emph{traces}
$u|_\Sigma$ and $u_\pm|_\Sigma$ on $\Sigma$ are well defined as functions in $L^2(\Sigma)$.
Further, for a function $u\in H^1(\dR^d\setminus \Sigma) \defeq H^1(\Omega_+)\oplus H^1(\Omega_-)$
we define its \emph{jump} on $\Sigma$ as $[u]_\Sigma \defeq u_+|_{\Sigma} - u_-|_{\Sigma}$.

Let us now introduce the following quadratic forms that correspond
to the formal expression in \eqref{formal_operators}.
According to~\cite[\S2]{BEKS94}, \cite[\S3.4]{BLL13_AHP}
and \cite[Proposition~3.1]{BEL14_RMP}, the symmetric quadratic forms
\begin{alignat*}{2}
	\frh_{\alpha,\Sigma}[u] &\defeq
	\|\nabla u\|^2 - (\alpha u|_{\Sigma}, u|_{\Sigma})_\Sigma, \qquad &
	\dom\frh_{\alpha,\Sigma} &\defeq H^1(\dR^d),
	\\[0.5ex]
	\frk_{\omega,\Sigma}[u]
	&\defeq
	\|\nabla u_+\|_+^2 + \|\nabla u_-\|_-^2 -(\omega [u]_{\Sigma}, [u]_{\Sigma})_\Sigma,
	\qquad &
	\dom\frk_{\omega, \Sigma} &\defeq H^1(\dR^d \setminus \Sigma),
\end{alignat*}
in $L^2(\dR^d)$ are closed, densely defined and bounded from below;
here $u = u_+ \oplus u_-$ with $u_\pm$ as above, and $\|\cdot\|_\pm$ denotes
the norm on $L^2(\Omega_\pm;\dC^d)$.

\begin{definition}\label{def:Ops}
	Let $\Op$ and $\Opp$ be the self-adjoint operators in $L^2(\dR^d)$
	corresponding to the forms $\frh_{\alpha,\Sigma}$ and $\frk_{\omega,\Sigma}$, respectively,
	via  the first representation theorem {\rm(\cite[Theorem~VI.2.1]{K})}.
	Moreover, set $\Opfree \defeq \sfH_{0,\Sigma}$ {\rm(}$\alpha \equiv 0${\rm)}
	and $\OpN \defeq \sfK_{0,\Sigma}$ {\rm(}$\omega \equiv 0${\rm)}.
\end{definition}
The operator $\Op$ is called Schr\"odinger operator with $\delta$-interaction
of strength $\alpha$ supported on $\Sigma$;
the operator $\Opp$ is called Schr\"odinger operator
with $\delta'$-interaction of strength%
\footnote{We point out that, in the case of invertible $\omega$, not $\omega$
itself, but its inverse is frequently called the strength of the $\delta'$-interaction.}
$\omega$ supported on $\Sigma$.
The operator $\Opfree$ is the usual \emph{free Laplacian} on $\dR^d$,
and $\OpN$ is the orthogonal sum of the standard \emph{Neumann Laplacians}
on $\Omega_+$ and $\Omega_-$.
Let us mention that the operators $\Op$ and $\Opp$ can also be introduced
via interface conditions at the hypersurface $\Sigma$; see, e.g.\ \cite{BLL13_AHP}.

The aim of this paper is to derive trace formulae for $\Op$ and $\Opp$.
According to \cite{BLL13_AHP} for $m\in\dN$ the resolvent power differences
\begin{equation}\label{resolv_pwr_diff}
\begin{alignedat}{2}
	& (\Op - \lm)^{-m} - (\Opfree - \lm)^{-m}, \qquad && m > \frac{d - 2}{2}\,,\quad
	\lm\in\rho(\Op), \\[1ex]
	& (\Opp - \lm)^{-m} - (\Opfree - \lm)^{-m},\qquad && m > \frac{d - 1}{2}\,,\quad
	\lm\in\rho(\Opp),
\end{alignedat}
\end{equation}
are in the trace class.
Their traces as functions of $\lm$ or as functions of interaction
strengths are expected to encode
a lot of information on the operators $\Op$ and $\Opp$ themselves and on
the shape of $\Sigma$.
Such non-trivial connections have been observed in various other
settings in the classical papers \cite{BF60, JP51, LP69}
and more recently in, e.g.\ \cite{BGR82, G84, KS03, KS09, M88}.

The main results of the paper (see Theorems~\ref{thm1} and \ref{thm2})
are formulae that express the traces of the resolvent
power differences in \eqref{resolv_pwr_diff}
in terms of traces of derivatives of certain operator-valued functions in the
boundary space $L^2(\Sigma)$.  These operator-valued functions are, in turn,
expressed in terms of Neumann-to-Dirichlet maps on $\Omega_\pm$
corresponding to the differential expression $-\Delta-\lm$ and
in terms of the coupling functions $\alpha$, $\omega$.
Trace formulae of this kind are useful (see, e.g.\ \cite{Ca02, CGNZ12, GZ12})
in connection with the estimation of the spectral shift function.

%-------------------------------------------------------------
\subsection{Traces, Neumann-to-Dirichlet maps and some operator functions}
%-------------------------------------------------------------

We first recall some notions that are needed in order to formulate the main
results of this paper.
For a compact operator $K$ in a Hilbert space $\cH$ we define its
\emph{singular values} $s_k(K)$, $k=1,2,\dots$, as the eigenvalues of
the non-negative compact operator $|K| = (K^*K)^{1/2} \ge 0$ in $\cH$
ordered in non-decreasing way and with multiplicities taken into account.
If $\sum_{k =1}^\infty s_k(K) < \infty$, we say that $K$ belongs to the \emph{trace class}
and define its \emph{trace} as
\[
	\Tr K \defeq \sum_{k=1}^\infty \lm_k(K),
\]
where $\lm_k(K)$ are the eigenvalues of $K$ repeated with their \emph{algebraic multiplicities}.
Note also that the series in the definition of the trace converges absolutely.

Let us also define some auxiliary maps associated with
partial differential equations. For the sake of brevity, we introduce the spaces
\begin{equation}\label{eq:H32pm}
	H^{3/2}_\Delta(\Omega_\pm) \defeq \big\{u_\pm\in H^{3/2}(\Omega_\pm):
	\Delta u_\pm \in L^2(\Omega_\pm)\big\}.
\end{equation}
For any $u_\pm \in H^{3/2}_\Delta(\Omega_\pm)$ its \emph{Neumann trace}
$\partial_{\nu_\pm} u_\pm|_{\Sigma}$ exists as a function in $L^2(\Sigma)$;
see, e.g.\ \cite[\S2.7.3]{LM72-I}.
For every $\lm\in\dC\setminus\dR_+$ (where $\dR_+\defeq[0,\infty)$)
and every $\varphi \in L^2(\Sigma)$ the boundary value problems
\begin{alignat*}{2}	
		-\Delta u_\pm  &= \lm u_\pm  \qquad &&\text{in}~\Omega_\pm, \\[0.5ex]
		\partial_{\nu_\pm} u_\pm\big|_{\Sigma} &= \varphi  \qquad &&\text{on}~\Sigma,
\end{alignat*}
have unique solutions $u_{\lm,\pm}(\varphi) \in H^{3/2}_\Delta (\Omega_\pm)$;
see, e.g.\ \cite[\S2.7.3]{LM72-I}.
The operator-valued functions $\lm \mapsto M_\pm(\lm)$, $\lm\in\dC\setminus\dR_+$,
are then defined as
\[
	M_\pm (\lm): L^2(\Sigma) \arr L^2(\Sigma),\qquad
	M_\pm(\lm)\varphi \defeq u_{\lm,\pm}(\varphi)|_\Sigma.
\]
For fixed $\lm\in\dC\setminus\dR_+$ the operators $M_\pm(\lm)$ are
the \emph{Neumann-to-Dirichlet maps} for the differential
expression $-\Delta - \lm$ on the domains $\Omega_\pm$.
The operators $M_\pm(\lm)$ are compact and injective and their
inverses are called \emph{Dirichlet-to-Neumann maps}.
Recently, there has been a considerable growth of interest
in the investigation of these maps (see, e.g.\ \cite{AtE11, AtE15, tEO14}),
in particular also with the aim to derive spectral properties of the
corresponding partial differential operators (see, e.g.\ \cite{AM12, BR15, Fr03}).

Further, we define the following operator-valued functions
$\lm \mapsto \wt M(\lm), \wh M(\lm)$, $\lm\in\dC\setminus\dR_+$, by
\begin{equation}\label{def:M}
	\wt M(\lm) \defeq \big(M_+(\lm)^{-1} + M_-(\lm)^{-1}\big)^{-1},
	\qquad
	\wh M(\lm) \defeq M_+(\lm) + M_-(\lm).
\end{equation}
We should mention that for every $\lm\in\dC\setminus\dR_+$ the
operator $M_+(\lm)^{-1} + M_-(\lm)^{-1}$ is invertible
and therefore $\wt M(\lm)$ is well defined.
Moreover, $\wt M(\lm)$ and $\wh M(\lm)$ are compact operators
in $L^2(\Sigma)$ for every $\lm\in\dC\setminus\dR_+$;
see \cite[Propositions~3.2 and 3.8]{BLL13_AHP}.
It is worth mentioning that $\wt M(\lm)$  and the inverse of $\wh M(\lm)$
appear naturally in the theory of boundary integral operators.
They are used in the treatment of partial differential equations from
both analytical~\cite{McL} and computational~\cite{St} viewpoints.
The operator-valued function $\wt M(\cdot)$ was successfully applied to the
spectral analysis of the operator $\Op$ in quite a few papers;
see, e.g.\ \cite{EHL06, EI01, EK15, KL14} and the survey paper \cite{E08}.

%-------------------------------------------------------------
\subsection{Statement of the main results}
%-------------------------------------------------------------

In the first main result of this note
we obtain a trace formula for the resolvent power difference of the
operators $\Op$ and $\Opfree$.

\begin{theorem}\label{thm1}
	Let the self-adjoint operators $\Opfree$ and $\Op$
	with $\alpha \in L^\infty(\Sigma;\dR)$	be as in Definition~\ref{def:Ops},
	and let the operator-valued function $\wt M$ be as in~\eqref{def:M}.
	Then for all $m \in \dN$ such that $m  > \frac{d-2}{2}$
	 and all $\lambda\in\rho(\Op)$
	the resolvent power difference
	\[
		\wt D_{\alpha,m}(\lm) \defeq (\Op-\lm)^{-m} - (\Opfree-\lm)^{-m}
	\]
	belongs to the trace class, and its trace can be expressed as
	\[
		\Tr\big(\wt D_{\alpha, m}(\lm)\big)
		=
		\frac{1}{(m-1)!}
		\Tr\Bigg(\frac{\dd^{m-1}}{\dd\lm^{m-1}}
		\Big(\big(I -\alpha \wt M(\lm)\big)^{-1}\alpha\wt M'(\lambda)\Big)\Bigg).
	\]
\end{theorem}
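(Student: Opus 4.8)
The plan is to reduce the $m$th power to the first-order resolvent difference and then exploit a Krein-type resolvent formula together with the standard identities of the associated $\gamma$-field. Concretely, I would first recall from \cite{BLL13_AHP} the resolvent formula
\[
	(\Op-\lm)^{-1} - (\Opfree-\lm)^{-1}
	= \gamma(\lm)\bigl(I-\alpha\wt M(\lm)\bigr)^{-1}\alpha\,\gamma(\ov\lm)^*,
\]
where $\gamma(\lm)\colon L^2(\Sigma)\to L^2(\dR^d)$ is the $\gamma$-field (Poisson-type operator) for which $\wt M$ is the Weyl function. Writing $R_0(\lm)\defeq(\Opfree-\lm)^{-1}$, the three identities I would use throughout are $\gamma'(\lm)=R_0(\lm)\gamma(\lm)$, its adjoint version $\frac{\dd}{\dd\lm}\gamma(\ov\lm)^*=\gamma(\ov\lm)^*R_0(\lm)$, and $\wt M'(\lm)=\gamma(\ov\lm)^*\gamma(\lm)$. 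Iterating the first two gives $\gamma^{(j)}(\lm)=j!\,R_0(\lm)^j\gamma(\lm)$ and the analogous formula for $\gamma(\ov\lm)^*$, which combine with the Weyl-function identity into the key collapse relation $\gamma(\ov\lm)^*R_0(\lm)^k\gamma(\lm)=\frac{1}{(k+1)!}\wt M^{(k+1)}(\lm)$.

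The second step is the reduction $\wt D_{\alpha,m}(\lm)=\frac{1}{(m-1)!}\frac{\dd^{m-1}}{\dd\lm^{m-1}}D_\alpha(\lm)$, where $D_\alpha(\lm)$ denotes the first-order resolvent difference above, so that it suffices to prove the trace identity $\Tr\bigl[D_\alpha^{(m-1)}(\lm)\bigr]=\Tr\bigl[\Phi^{(m-1)}(\lm)\bigr]$ with $\Phi(\lm)\defeq\bigl(I-\alpha\wt M(\lm)\bigr)^{-1}\alpha\,\wt M'(\lm)$, which is exactly the integrand in the statement. Setting $E(\lm)\defeq(I-\alpha\wt M(\lm))^{-1}\alpha$, one checks directly that $E'=E\wt M'E$, so $\Phi=E\wt M'$. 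I would then differentiate $D_\alpha=\gamma E\gamma(\ov\lm)^*$ exactly $m-1$ times by the Leibniz rule, producing a sum of terms of the form $\gamma\,R_0^{\,j}\,(E\text{-polynomial in }\wt M')\,R_0^{\,i}\,\gamma(\ov\lm)^*$. Under the trace, cyclicity moves the outer factors $R_0^{\,j}\gamma$ and $\gamma(\ov\lm)^*R_0^{\,i}$ together, and the collapse relation converts each such sandwich into a derivative $\wt M^{(\cdot)}$. The base case $m=1$ is immediate: $\Tr[\gamma E\gamma(\ov\lm)^*]=\Tr[\gamma(\ov\lm)^*\gamma E]=\Tr[\wt M'E]=\Tr[\Phi]$. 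The cleanest way to organise the general case is either an induction on $m$ or an abstract lemma asserting $\Tr[(\gamma E\gamma(\ov\lm)^*)^{(n)}]=\Tr[(E\wt M')^{(n)}]$ for holomorphic families satisfying the three identities above; a short computation (as in the $m=2$ check, where both sides equal $\Tr[\wt M''E]+\Tr[\wt M'E\wt M'E]$) confirms the matching term by term.

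The main obstacle is not the algebra but the analytic justification of every trace-class manipulation. I must verify that each individual summand $\gamma\,R_0^{\,j}(\cdots)R_0^{\,i}\gamma(\ov\lm)^*$ arising in the Leibniz expansion is itself of trace class, so that cyclicity of the trace is legitimate; this requires distributing the smoothing of the resolvent powers and the $\gamma$-fields across the factors and invoking the Schatten-class mapping properties of $\gamma(\lm)$, $R_0(\lm)$ and the compact operators $M_\pm(\lm)$ established in \cite{BLL13_AHP}, together with the assumption $m>\frac{d-2}{2}$ that guarantees the overall trace-class property. One also has to justify interchanging the $(m-1)$-fold $\lm$-differentiation with the trace, which follows from local uniform convergence of the difference quotients in the trace norm on compact subsets of $\rho(\Op)\cap(\dC\setminus\dR_+)$, and finally to extend the resulting formula from $\lm\in\dC\setminus\dR_+$ to all $\lm\in\rho(\Op)$ by analytic continuation of both trace-class-valued sides.
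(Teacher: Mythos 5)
Your proposal is correct and follows essentially the same route as the paper: Krein's resolvent formula for $(\Op-\lm)^{-1}-(\Opfree-\lm)^{-1}$, the derivative identities for the $\gamma$-field and Weyl function, the Leibniz rule applied to the $(m-1)$th derivative of the first-order difference, Schatten--von Neumann estimates to justify trace-class membership of each summand and cyclicity of the trace, and reassembly of the cyclically permuted sum into the derivative of $\bigl(I-\alpha\wt M(\lm)\bigr)^{-1}\alpha\wt M'(\lm)$. The only cosmetic differences are that you absorb $\alpha$ into $E(\lm)$ where the paper works with $\wt T(\lm)=\bigl(I-\alpha\wt M(\lm)\bigr)^{-1}$ and proves $\wt T^{(k)}(\lm)\in\sS_{\frac{d-1}{2k+1},\infty}$ by induction, and your final analytic-continuation step is unnecessary since $\rho(\Op)\subset\dC\setminus\dR_+$.
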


\medskip

\noindent
In the second main result of this note we obtain trace formulae for the
resolvent power differences
of the pairs of operators $\{\Opp, \OpN\}$ and $\{\Opp, \Opfree\}$.

\begin{theorem}\label{thm2}
	Let the self-adjoint operators $\Opfree$, $\OpN$ and $\Opp$
	with $\omega\in L^\infty(\Sigma;\dR)$ be as in Definition~\ref{def:Ops},
	and let the operator-valued function $\wh M$ be as in~\eqref{def:M}.
	Then the following statements hold.
	\begin{myenum}
		\item For all $m \in \dN$ such that
		$m  > \frac{d-2}{2}$ and all $\lm\in\rho(\Opp)$
		the resolvent power difference
		\[
			\wh E_{\omega, m}(\lm) \defeq
			(\Opp -\lm)^{-m} - (\OpN-\lambda)^{-m}
		\]
		belongs to the trace class, and its trace can be expressed as
		\[
			\Tr\big(\wh E_{\omega, m}(\lm)\big)
			=
			\frac{1}{(m-1)!}\Tr\Bigg(
			\frac{\dd^{m-1}}{\dd\lm^{m-1}}
			\Big(\big(I -\omega \wh M(\lambda)\big)^{-1}\omega\wh M'(\lambda)
			\Big)\Bigg).
		\]
		\item
		For all $m \in \dN$ such that $m  > \frac{d-1}{2}$ and
		all $\lm\in\rho(\Opp)$ 	the resolvent power difference
		\[
			\wh D_{\omega, m}(\lm)
			\defeq ( \Opp -\lm )^{-m} - ( \Opfree-\lm )^{-m}
		\]
		belongs to the trace class, and its trace can be expressed as
        	\[
			\Tr\big(\wh D_{\omega, m}(\lm)\big) =
			\frac{1}{(m-1)!}\Tr\Bigg(
			\frac{\dd^{m-1}}{\dd\lm^{m-1}}
			\Big(\big(I -\omega \wh M(\lambda)\big)^{-1}
			\wh M(\lm)^{-1}\wh M'(\lambda)\Big)\Bigg).
		\]
	\end{myenum}
\end{theorem}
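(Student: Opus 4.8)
The plan is to reduce both statements to the corresponding resolvent formulae for the \emph{first} power and then to pass from the trace in $L^2(\dR^d)$ to a trace in the boundary space $L^2(\Sigma)$. The starting point is a Krein-type resolvent formula in the framework of the quasi boundary triple associated with the decoupled operator $\OpN$, with $\gamma$-field $\gamma(\lm)$ and Weyl function $\wh M(\lm)$ (cf.\ \cite{BLL13_AHP}). The interface conditions extracted from $\frk_{\omega,\Sigma}$ read $\partial_{\nu_+}u_+|_\Sigma=\omega[u]_\Sigma$ together with continuity of the normal derivative, i.e.\ $\Gamma_0 u=\omega\Gamma_1 u$ with $\Gamma_0$ the common Neumann trace and $\Gamma_1=[\,\cdot\,]_\Sigma$; inserting the decomposition $u=(\OpN-\lm)^{-1}f+\gamma(\lm)\Gamma_0 u$ and using $\Gamma_1\gamma(\lm)=\wh M(\lm)$, $\Gamma_1(\OpN-\lm)^{-1}=\gamma(\ov\lm)^*$ yields
\[
	(\Opp-\lm)^{-1}-(\OpN-\lm)^{-1}
	=\gamma(\lm)\big(I-\omega\wh M(\lm)\big)^{-1}\omega\,\gamma(\ov\lm)^*,
\]
where the coupling form on the right makes sense for any (possibly non-invertible) $\omega\in L^\infty(\Sigma;\dR)$. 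Using the elementary identity $(A-\lm)^{-m}=\frac{1}{(m-1)!}\frac{\dd^{m-1}}{\dd\lm^{m-1}}(A-\lm)^{-1}$ for $A=\Opp$ and $A=\OpN$, I would write $\wh E_{\omega,m}(\lm)=\frac{1}{(m-1)!}\frac{\dd^{m-1}}{\dd\lm^{m-1}}\big(\gamma(\lm)(I-\omega\wh M(\lm))^{-1}\omega\,\gamma(\ov\lm)^*\big)$ and take the trace.

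The heart of the argument is a ``cyclicity under differentiation'' lemma: using the standard $\gamma$-field relations $\frac{\dd}{\dd\lm}\gamma(\lm)=(\OpN-\lm)^{-1}\gamma(\lm)$, $\frac{\dd}{\dd\lm}\gamma(\ov\lm)^*=\gamma(\ov\lm)^*(\OpN-\lm)^{-1}$ and $\wh M'(\lm)=\gamma(\ov\lm)^*\gamma(\lm)$, one has
\[
	\Tr\Bigg(\frac{\dd^{m-1}}{\dd\lm^{m-1}}\big(\gamma(\lm)B(\lm)\gamma(\ov\lm)^*\big)\Bigg)
	=\Tr\Bigg(\frac{\dd^{m-1}}{\dd\lm^{m-1}}\big(B(\lm)\wh M'(\lm)\big)\Bigg)
\]
for any smooth operator function $B(\lm)$ in $L^2(\Sigma)$. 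I would prove this by expanding both sides with the Leibniz rule, inserting $\gamma^{(j)}(\lm)=j!(\OpN-\lm)^{-j}\gamma(\lm)$ and $(\gamma(\ov\lm)^*)^{(l)}=l!\gamma(\ov\lm)^*(\OpN-\lm)^{-l}$, and using cyclicity of the trace to move the resolvent powers across; both sides then collapse to the same sum $\sum_{p+k=m-1}(p+1)\frac{(m-1)!}{k!\,p!}\Tr(\gamma^{(p)}(\lm)B^{(k)}(\lm)\gamma(\ov\lm)^*)$. Taking $B(\lm)=(I-\omega\wh M(\lm))^{-1}\omega$ proves part~(i).

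For part~(ii) I would telescope through $\OpN$, writing $\wh D_{\omega,m}(\lm)=\wh E_{\omega,m}(\lm)+\big((\OpN-\lm)^{-m}-(\Opfree-\lm)^{-m}\big)$. The pair $\{\Opfree,\OpN\}$ carries the same quasi boundary triple, with $\Opfree=\ker\Gamma_1$ the extension given by continuity of the Dirichlet trace, so the boundary condition $\Gamma_1 u=0$ gives $(\Opfree-\lm)^{-1}-(\OpN-\lm)^{-1}=-\gamma(\lm)\wh M(\lm)^{-1}\gamma(\ov\lm)^*$. Adding the two first-power formulae and simplifying through the algebraic identity $(I-\omega\wh M)^{-1}\omega+\wh M^{-1}=(I-\omega\wh M)^{-1}\wh M^{-1}$ gives $(\Opp-\lm)^{-1}-(\Opfree-\lm)^{-1}=\gamma(\lm)(I-\omega\wh M(\lm))^{-1}\wh M(\lm)^{-1}\gamma(\ov\lm)^*$; the same lemma, now with $B(\lm)=(I-\omega\wh M(\lm))^{-1}\wh M(\lm)^{-1}$, then produces the stated formula.

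The trace-class membership of the left-hand sides is already available (cf.\ \cite{BLL13_AHP}), so the main obstacle is the rigorous bookkeeping on the boundary side: I must ensure that every operator appearing after the Leibniz expansion lies in a suitable Schatten class, so that all traces are finite and the interchange of $\frac{\dd^{m-1}}{\dd\lm^{m-1}}$ with $\Tr$ (in trace norm) is legitimate and cyclicity may be applied. This rests on the smoothing orders of $\gamma(\lm)$ together with $\wh M(\lm)$ (order $-1$) in part~(i), versus $\wh M(\lm)^{-1}$ (order $+1$) in part~(ii); the loss of one order of smoothing caused by the Dirichlet-to-Neumann factor is precisely what raises the threshold from $m>\frac{d-2}{2}$ to $m>\frac{d-1}{2}$.
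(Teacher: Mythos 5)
Your part~(i) is essentially the paper's proof: the Krein formula $(\Opp-\lm)^{-1}-(\OpN-\lm)^{-1}=\wh\gamma(\lm)(I-\omega\wh M(\lm))^{-1}\omega\,\wh\gamma(\ov\lm)^*$ is differentiated $m-1$ times, each Leibniz term is placed in a weak Schatten class via the smoothing orders of $\wh\gamma^{(k)}(\lm)$, $\frac{\dd^k}{\dd\lm^k}\wh\gamma(\ov\lm)^*$ and $\wh M^{(k)}(\lm)$ (Lemma~\ref{lem:der2} together with an induction for the derivatives of $(I-\omega\wh M(\lm))^{-1}$), and cyclicity of the trace plus $\wh M'(\lm)=\wh\gamma(\ov\lm)^*\wh\gamma(\lm)$ collapses the sum; your symmetric-sum verification of the ``cyclicity under differentiation'' identity is just a different bookkeeping of the same computation. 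For part~(ii) you deviate: you add the two first-power Krein formulas and simplify with $(I-\omega\wh M)^{-1}\omega+\wh M^{-1}=(I-\omega\wh M)^{-1}\wh M^{-1}$ \emph{before} differentiating, whereas the paper proves a separate trace formula for the pair $\{\OpN,\Opfree\}$ (Lemma~\ref{lem:aux}) and applies the same algebraic identity \emph{inside} the trace afterwards. The two routes are equivalent in substance, but yours forces you to differentiate an operator function containing the unbounded factor $\wh M(\lm)^{-1}$, so your cyclicity lemma as stated (for a bounded family $B(\lm)$) does not apply directly; this is precisely the point Lemma~\ref{lem:aux} is designed to handle. The paper works with the bounded composite $S(\lm)=\wh M(\lm)^{-1}\wh\gamma(\ov\lm)^*$, derives the identity $S'(\lm)=S(\lm)R(\lm)$ from Krein's formula for $\{\OpN,\Opfree\}$, and from it obtains by induction the mapping property $\ran\bigl(S^{(k)}(\lm)\uhr H^s(\dR^d)\bigr)\subset H^{s+2k+1/2}(\Sigma)$, hence $S^{(k)}(\lm)\in\sS_{\frac{d-1}{2k+1/2},\infty}$ --- the loss of one order relative to $\wh\gamma(\ov\lm)^*$ that, as you correctly observe, raises the threshold to $m>\frac{d-1}{2}$. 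To complete your version you would need the analogous estimates for the derivatives of $(I-\omega\wh M(\lm))^{-1}\wh M(\lm)^{-1}\wh\gamma(\ov\lm)^*$, which amounts to reproving that lemma; so the content coincides, and only the order of the algebraic and analytic steps is interchanged.
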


\medskip

\noindent
We remark that it is also implicitly shown that the derivatives of the operator-valued functions appearing
in the trace formulae exist in a suitable sense and that these derivatives belong to the trace class.

The main ingredients used in the proofs, which are given in Section~\ref{sec:proofs},
are Krein-type resolvent formulae,
properties of weak Schatten--von Neumann classes,
asymptotics of eigenvalues of the Laplace--Beltrami operator
on $\Sigma$, and elements of elliptic regularity theory.
We point out that for the proof of Theorem~\ref{thm2}\,(ii) an auxiliary trace formula
for the resolvent power differences of $\Opfree$ and $\OpN$ is derived in
Lemma~\ref{lem:aux}.  This trace formula is also of certain independent interest.
We also mention that a similar strategy of proof was employed in our previous
paper \cite{BLL13_LMS} where we proved trace formulae for generalized Robin Laplacians.

%*************************************************************
\section{Preliminaries}
%*************************************************************

This section consists of five subsections. In Subsection~\ref{ssec:Sp}
we recall the notion of weak Schatten--von Neumann classes
and their connection with the trace class,
and in Subsection~\ref{ssec:derivative} we collect certain formulae that involve
derivatives of holomorphic operator-valued functions.
Next, in Subsection~\ref{ssec:qbt} we recall the definitions of
quasi boundary triples and associated $\gamma$-fields and Weyl functions.
Krein's resolvent formulae and sufficient conditions for self-adjointness of
extensions are discussed in Subsection~\ref{ssec:Krein}.
Finally, in Subsection~\ref{ssec:qbt_pde} we introduce specific quasi boundary triples,
which are used to parameterize Schr\"odinger operators with singular interactions
from Definition~\ref{def:Ops}.

%-------------------------------------------------------------
\subsection{$\sS_{p,\infty}$-classes and the trace mapping}
\label{ssec:Sp}
%-------------------------------------------------------------

Let $\cH$ and $\cK$ be Hilbert spaces.
Denote by $\sS_\infty(\cH,\cK)$ the class of all compact operators
$K: \cH\arr\cK$.
Recall that, for $p>0$,
the \emph{weak Schatten--von Neumann ideal} $\sS_{p,\infty}(\cH,\cK)$ is defined by
\begin{equation*}\label{def_Sp}
	\sS_{p,\infty}(\cH,\cK) \defeq \Bigl\{K\in\sS_\infty(\cH,\cK): s_k(K)
	= \rmO\bigl(k^{-1/p}\bigr),\,k\to\infty\Bigr\}.
\end{equation*}
Often we just write $\sS_{p,\infty}$ instead of $\sS_{p,\infty}(\cH,\cK)$.
For $0< p' < p$  the inclusion
\begin{equation}\label{Sp_con_Spinf}
	\sS_{p,\infty} \subset \sS_{p',\infty}
\end{equation}
holds, and for $s,t >0$ one has
\begin{equation}\label{prod_Sp}
	\sS_{\frac{1}{s},\infty}\cdot\sS_{\frac{1}{t},\infty} =  \sS_{\frac{1}{s+t},\infty},
\end{equation}
where a product of operator ideals is defined as the set of all products.
We refer the reader to \cite[\S\S III.7 and III.14]{GK69} and \cite[Chapter~2]{S05}
for a detailed study of the classes $\sS_{p,\infty}$; see also \cite[Lemma~2.3]{BLL13_IEOT}.
If $K\in \sS_{p,\infty}$ with $p < 1$, then $K$ belongs to the trace class.
It is well known (see, e.g.\ \cite[\S III.8]{GK69}) that, for trace class operators
$K_1,K_2$, the operator $K_1 + K_2$ is also in the trace class, and that
\begin{equation}\label{trace1}
  \Tr(K_1 + K_2) = \Tr K_1 + \Tr K_2.
\end{equation}
Moreover, if $K_1\in\cB(\cH,\cK)$ and $K_2\in\cB(\cK,\cH)$ are such that
both products $K_1 K_2$ and $K_2 K_1$ are in the trace class, then
\begin{equation}\label{trace2}
  \Tr(K_1K_2) = \Tr( K_2 K_1).
\end{equation}
The next useful lemma is a special case of \cite[Lemma~4.7]{BLL13_IEOT} and is based
on the asymptotics of the eigenvalues of the Laplace--Beltrami operator.
For a smooth compact manifold $\Sigma$ we denote the usual $L^2$-based Sobolev
spaces by $H^r(\Sigma)$, $r\geq 0$.

% -------------------------------------------------------------------
\begin{lemma}\label{le.s_emb}
	Let $\Sigma$ be a $(d-1)$-dimensional
	compact $C^\infty$-manifold without boundary, let
	$\cK$ be a Hilbert space and let $K\in\cB(\cK,L^2(\Sigma))$
	with $\ran K \subset H^r(\Sigma)$, where $r > 0$.
	Then $K$ is compact and $K\in\sS_{\frac{d-1}{r},\infty}$.
\end{lemma}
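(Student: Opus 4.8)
The plan is to prove Lemma~\ref{le.s_emb} by reducing the membership $K\in\sS_{\frac{d-1}{r},\infty}$ to the known spectral asymptotics of the Laplace--Beltrami operator on $\Sigma$. First I would introduce the Laplace--Beltrami operator $-\Delta_\Sigma$ on the compact $(d-1)$-dimensional manifold $\Sigma$ and consider the self-adjoint operator $\Lambda \defeq (I-\Delta_\Sigma)^{r/2}$ in $L^2(\Sigma)$, whose domain is precisely $H^r(\Sigma)$ with an equivalent norm. The key structural observation is that the hypothesis $\ran K\subset H^r(\Sigma)=\dom\Lambda$ means the composed operator $\Lambda K$ is everywhere defined on $\cK$; by the closed graph theorem $\Lambda K$ is bounded, so we may factor
\[
	K = \Lambda^{-1}(\Lambda K),
\]
where $\Lambda^{-1}\in\cB(L^2(\Sigma))$ and $\Lambda K\in\cB(\cK,L^2(\Sigma))$.

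The main analytic input enters through the eigenvalue asymptotics: by Weyl's law the eigenvalues $\mu_k$ of $-\Delta_\Sigma$ satisfy $\mu_k\sim c\,k^{2/(d-1)}$ as $k\to\infty$, so the eigenvalues of $\Lambda=(I-\Delta_\Sigma)^{r/2}$ grow like $k^{r/(d-1)}$, and hence the singular values of the compact operator $\Lambda^{-1}$ satisfy
\[
	s_k\big(\Lambda^{-1}\big) = \rmO\big(k^{-r/(d-1)}\big),\qquad k\to\infty.
\]
This is exactly the statement that $\Lambda^{-1}\in\sS_{\frac{d-1}{r},\infty}(L^2(\Sigma))$. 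Next I would invoke the standard ideal property of the weak Schatten--von Neumann classes under multiplication by bounded operators: since $s_k(AB)\le\|A\|\,s_k(B)$ and similarly on the other side, composing an $\sS_{p,\infty}$-operator with a bounded operator stays in $\sS_{p,\infty}$. Applying this to $K=\Lambda^{-1}(\Lambda K)$ with $\Lambda^{-1}\in\sS_{\frac{d-1}{r},\infty}$ and $\Lambda K$ bounded yields $K\in\sS_{\frac{d-1}{r},\infty}$, and compactness of $K$ follows since $\sS_{\frac{d-1}{r},\infty}\subset\sS_\infty$.

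The step I expect to require the most care is justifying the factorization and the boundedness of $\Lambda K$. One must verify that $H^r(\Sigma)$ indeed coincides with $\dom\big((I-\Delta_\Sigma)^{r/2}\big)$ with equivalent norms for all real $r>0$ (not merely integer $r$), which rests on the spectral calculus for $-\Delta_\Sigma$ together with the characterization of Sobolev spaces on a closed manifold via the eigenfunction expansion; for non-integer $r$ this is the definition of the fractional Sobolev space through interpolation or the spectral measure, so the identification is essentially by construction once the correct convention is fixed. The closed graph argument for the boundedness of $\Lambda K$ is routine but should be stated: if $x_n\to x$ in $\cK$ and $\Lambda K x_n\to y$, then $Kx_n\to Kx$ in $L^2(\Sigma)$ by boundedness of $K$, while $\Lambda$ is closed, forcing $Kx\in\dom\Lambda$ and $\Lambda Kx=y$. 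Since this lemma is quoted as a special case of \cite[Lemma~4.7]{BLL13_IEOT}, I would also point out that the full generality there likely allows $\ran K$ in a Sobolev space over a broader class of manifolds, but the present formulation needs only the clean Weyl asymptotics on the smooth closed hypersurface $\Sigma$.
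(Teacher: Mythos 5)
Your proposal is correct and follows exactly the route the paper indicates for this lemma (which it quotes from \cite[Lemma~4.7]{BLL13_IEOT} without reproving): factor $K$ through the inverse of $(I-\Delta_\Sigma)^{r/2}$, identify $H^r(\Sigma)$ with its domain, apply the closed graph theorem, and use Weyl asymptotics for the Laplace--Beltrami eigenvalues together with the ideal property of $\sS_{p,\infty}$. No gaps; this is the standard argument the authors have in mind.
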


%------------------------------------------------------------------
\subsection{Derivatives of holomorphic operator-valued functions}
%------------------------------------------------------------------
\label{ssec:derivative}

In the following we shall often use product rules for holomorphic operator-valued
functions.  Let $\cH_i$, $i = 1,\dots,4$, be Hilbert spaces, $\cU$ a domain in $\CC$
and let $A: \cU\to\cB(\cH_3,\cH_4)$, $B: \cU\to\cB(\cH_2,\cH_3)$,
$C: \cU\to\cB(\cH_1,\cH_2)$ be holomorphic operator-valued functions.  Then
for $\lm\in \cU$ we have
\begin{subequations}
\begin{align}
	\label{rule1}
	\frac{\dd^m}{\dd\lm^m}\bigl(A(\lm)B(\lm)\bigr)
	& = \sum_{ \substack{p+q = m\\ p,q \ge 0}} \binom{m}{p} A^{(p)}(\lm)B^{(q)}(\lm), \\
	\frac{\dd^m}{\dd\lm^m}\bigl(A(\lm)B(\lm)C(\lm)\bigr)
	&= \sum_{\substack{p+q+r = m\\  p,q,r \ge 0}}
	\frac{m!}{p!\,q!\,r!} A^{(p)}(\lm) B^{(q)}(\lm) C^{(r)}(\lm).
	\label{rule2}
\end{align}
\end{subequations}
If $A(\lm)^{-1}$ is invertible for every $\lm\in \cU$, then
relation \eqref{rule1} implies the following formula for the derivative
of the inverse,
\begin{equation}\label{rule3}
	\frac{\dd}{\dd\lm}\bigl(A(\lm)^{-1}\bigr) = -A(\lm)^{-1}A'(\lm)A(\lm)^{-1}.
\end{equation}
%

%-------------------------------------------------------------
\subsection{Quasi boundary triples, Weyl functions and {\boldmath$\gamma$}-fields}
\label{ssec:qbt}
%-------------------------------------------------------------

We begin this subsection by recalling
the abstract concept of \emph{quasi boundary triples} introduced in \cite{BL07}
as a generalization of the notion of (ordinary) \emph{boundary triples} \cite{Br76, Ko75}.
For the theory of ordinary boundary triples and associated Weyl functions the
reader may consult, e.g.\ \cite{BGP08, DM91, DM95}.
Recent developments on quasi boundary triples and their applications to PDEs
can be found in, e.g.\ \cite{BL12, BLL13_IEOT, BLL13_LMS, BR15}.
\begin{definition}\label{def:qbt}
	Let $S$ be a closed, densely defined, symmetric operator in a Hilbert
	space $(\cH, (\cdot,\cdot)_\cH)$.
	A triple $\{\cG,\Gamma_0,\Gamma_1\}$ is called a \emph{quasi boundary triple}
	for $S^*$ if $(\cG,(\cdot,\cdot)_\cG)$ is a Hilbert space, and for some
	linear operator $T\subset S^*$ with $\ov T  = S^*$ the following assumptions
	are satisfied:
	\begin{myenum}
		\item
		$\Gamma_0,\Gamma_1: \dom T\arr\cG$ are linear mappings,
		and the
		mapping $\Gamma \defeq \binom{\Gamma_0}{\Gamma_1}$
		has dense range  in $\cG\times\cG$;
		\item
		$A_0 \defeq T\upharpoonright\ker\Gamma_0$ is a self-adjoint operator
		in $\cH$;
		\item
		for all $f,g\in \dom T$ the \emph{abstract Green identity} holds:
		\begin{equation*}\label{green}
			(Tf,g)_{\cH}-(f,Tg)_{\cH}
			=
			(\Gamma_1 f,\Gamma_0 g)_{\cG}-(\Gamma_0 f,\Gamma_1 g)_{\cG}.
		\end{equation*}
	\end{myenum}
\end{definition}

\medskip

\noindent
Next, we recall the definitions of the $\gamma$-field and the Weyl
function associated with a quasi boundary triple $\{\cG, \Gamma_0,
\Gamma_1 \}$ for $S^*$. Note that the decomposition
\[
	\dom T = \dom A_0\,\dot +\,\ker(T-\lambda)
\]
holds for all $\lm\in\rho(A_0)$, so that $\Gamma_0\uhr\ker(T-\lambda)$
is injective for all $\lm\in\rho(A_0)$.
The (operator-valued) functions $\gamma$ and $M$ defined by
\begin{equation*}\label{gweyl}
	\gamma(\lambda) \defeq
	\bigl(\Gamma_0\upharpoonright\ker(T-\lambda)\bigr)^{-1}
	\quad\text{and}\quad
	M(\lambda) \defeq \Gamma_1\gamma(\lambda),\quad
	\lambda\in\rho(A_0),
\end{equation*}
are called the $\gamma$\emph{-field} and the \emph{Weyl function}
corresponding to the quasi boundary triple $\{\cG,\Gamma_0,\Gamma_1\}$.
The adjoint of $\gamma(\ov\lm)$ has the following representation:
\begin{equation}\label{eq:g*}
	\gamma(\ov\lm)^* = \Gamma_1(A_0 - \lm)^{-1},\qquad \lm\in\rho(A_0);
\end{equation}
see \cite[Proposition~2.6\,(ii)]{BL07} and also \cite[Proposition~7.5]{BL12}.
According to \cite[Proposition~2.6]{BL07} the operator-valued functions
$\lm\mapsto \gamma(\lm)$, $\lm\mapsto \gamma(\ov\lm)^*$
and $\lm\mapsto M(\lm)$ are holomorphic on $\rho(A_0)$.
Finally, we recall formulae for their derivatives:
for $k\in\dN$, $\varphi\in \ran\Gamma_0$ and $\lambda\in\rho(A_0)$ we have
\begin{subequations}\label{der_gamma_field_Weyl_function}
\begin{align}
	\gamma^{(k)}(\lm)\varphi &= k!(A_0 -\lm)^{-k}\gamma(\lm)\varphi,
	\label{der:g} \\[0.5ex]
	\frac{\dd^k}{\dd\lm^k}\bigl(\gamma(\ov\lm)\bigr)^*
	&= k!\,\gamma(\ov\lm)^*(A_0 -\lm)^{-k},
	\label{der:g*} \\[1ex]
	M^{(k)}(\lm)\varphi
	&= k!\,\gamma(\ov\lm)^* (A_0 -\lm)^{-(k-1)}\gamma(\lm)\varphi;
	\label{der:M}
\end{align}
\end{subequations}
see \cite[Lemma~2.4]{BLL13_LMS}.

%-------------------------------------------------------------
\subsection{Self-adjoint extensions and abstract Krein's resolvent formula}\label{ssec:Krein}
\label{ssec:Krein}
%-------------------------------------------------------------

In this subsection we parameterize subfamilies of self-adjoint extensions
via quasi boundary triples and provide a couple of useful Krein-type formulae
for resolvent differences of these extensions.

The following hypothesis will be useful in the following.
\begin{hypothesis}\label{hyp:qbt}
	Let $S$ be a closed, densely defined, symmetric operator in a Hilbert
	space $\cH$ and let $\{\cG,\Gamma_0,\Gamma_1\}$ be a quasi boundary triple for $S^*$
	such that $\ran\Gamma_0 = \cG$.
	Moreover, let $\gamma$ and $M$ be the associated $\gamma$-field and Weyl
	function, respectively.
\end{hypothesis}
We remark that the quasi boundary triple $\{\cG,\Gamma_0,\Gamma_1\}$
in Hypothesis~\ref{hyp:qbt} is also a generalized boundary triple in the sense
of~\cite{DM95}.  In this case the $\gamma$-field and the Weyl function
associated with $\{\cG,\Gamma_0,\Gamma_1\}$ are defined on the whole space $\cG$,
and the formulae~\eqref{der:g} and~\eqref{der:M} are valid for all $\varphi\in\cG$.

Next, we state a Krein-type formula for the resolvent difference of
$A_j \defeq T \uhr \ker\Gamma_j$, $j=0,1$.
\begin{proposition} {\rm\cite[Theorem~2.5]{BLL13_LMS}}
	Assume that Hypothesis~\ref{hyp:qbt} is satisfied and that $A_1$ is
	self-adjoint in $\cH$.  Then the formula
	\[
		(A_0 - \lm)^{-1} - (A_1 -\lm)^{-1} = \gamma(\lm) M(\lm)^{-1} \gamma(\ov\lm)^*
	\]
	holds for all $\lm\in\rho(A_0)\cap\rho(A_1)$.
	\label{prop:Krein1}
\end{proposition}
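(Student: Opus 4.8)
The plan is to prove Proposition~\ref{prop:Krein1} by verifying the identity on the range and then using density. First I would fix $\lm\in\rho(A_0)\cap\rho(A_1)$ and start from an arbitrary $f\in\cH$. The key idea is that both sides should agree when applied to $f$, so I would set $u \defeq (A_0-\lm)^{-1}f$ and $v \defeq (A_1-\lm)^{-1}f$, so that $(T-\lm)(u-v)=0$, i.e.\ $u-v\in\ker(T-\lm)$. Applying the boundary maps, $\Gamma_0 u = 0$ (since $u\in\dom A_0=\ker\Gamma_0$) and $\Gamma_1 v = 0$ (since $v\in\dom A_1=\ker\Gamma_1$), so $\Gamma_0(u-v) = -\Gamma_0 v$ and $\Gamma_1(u-v)=\Gamma_1 u$. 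Using the definitions of the $\gamma$-field and Weyl function, $u-v = \gamma(\lm)\Gamma_0(u-v) = -\gamma(\lm)\Gamma_0 v$ and $M(\lm)\Gamma_0(u-v)=\Gamma_1(u-v)$, which gives $\Gamma_1 u = -M(\lm)\Gamma_0 v$, hence $\Gamma_0 v = -M(\lm)^{-1}\Gamma_1 u$.

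Next I would identify $\Gamma_1 u$. Since $u=(A_0-\lm)^{-1}f$, the adjoint representation \eqref{eq:g*}, namely $\gamma(\ov\lm)^* = \Gamma_1(A_0-\lm)^{-1}$, yields $\Gamma_1 u = \gamma(\ov\lm)^* f$. Combining the pieces, $u-v = -\gamma(\lm)\Gamma_0 v = \gamma(\lm)M(\lm)^{-1}\Gamma_1 u = \gamma(\lm)M(\lm)^{-1}\gamma(\ov\lm)^* f$, which is exactly the claimed formula. The structural steps are therefore: reduce to the kernel of $T-\lm$, use injectivity of $\Gamma_0\uhr\ker(T-\lm)$ together with the $\gamma$-field identity, and feed in the adjoint formula \eqref{eq:g*}.

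The main obstacle is justifying invertibility of $M(\lm)$ and ensuring $\Gamma_1 u$ lies in a suitable domain so that $M(\lm)^{-1}$ can be applied. Under Hypothesis~\ref{hyp:qbt} we have $\ran\Gamma_0=\cG$, so the $\gamma$-field and Weyl function are defined on all of $\cG$; self-adjointness of $A_1=T\uhr\ker\Gamma_1$ is precisely what guarantees that $M(\lm)$ is invertible for $\lm\in\rho(A_0)\cap\rho(A_1)$, since $\ker M(\lm)$ is tied to eigenvectors of $A_1$ at $\lm$. I would therefore record, before the main computation, that under these hypotheses $M(\lm)^{-1}$ exists as an (in general unbounded) operator and that $\Gamma_1 u=\gamma(\ov\lm)^* f\in\ran M(\lm)=\dom M(\lm)^{-1}$, so all manipulations are legitimate. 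Since this statement is quoted as \cite[Theorem~2.5]{BLL13_LMS}, the cleanest route is to cite that reference for the invertibility of $M(\lm)$ and present the verification above as the derivation of the resolvent formula.
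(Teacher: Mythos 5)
Your argument is correct: the paper itself gives no proof of Proposition~\ref{prop:Krein1} (it is imported verbatim from \cite[Theorem~2.5]{BLL13_LMS}), and your derivation is exactly the standard one used there and throughout the boundary-triple literature --- form $u-v\in\ker(T-\lm)$ from the two resolvents, apply $\Gamma_0$, $\Gamma_1$ and the definitions of $\gamma(\lm)$ and $M(\lm)$, and insert \eqref{eq:g*}. You also correctly flag the only delicate points: injectivity of $M(\lm)$ (which follows since a nonzero $\varphi\in\ker M(\lm)$ would produce $\gamma(\lm)\varphi\in\ker(A_1-\lm)\setminus\{0\}$, contradicting $\lm\in\rho(A_1)$) and the fact that $\gamma(\ov\lm)^*f=\Gamma_1 u=-M(\lm)\Gamma_0 v\in\ran M(\lm)$, which your computation itself establishes, so $M(\lm)^{-1}$ is legitimately applied.
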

In the next proposition, we formulate a sufficient condition for
self-adjointness of the extension of $A$ defined by
\[
	\AB \defeq T\uhr \ker(B\Gamma_1 - \Gamma_0),
\]
and provide a Krein-type formula for the resolvent difference of $\AB$ and $A_0$.

\begin{proposition} {\rm\cite[Theorem~2.6]{BLL13_LMS}	}
	\label{prop:Krein2}
	Assume that Hypothesis~\ref{hyp:qbt} is satisfied, that $M(\lm_0)\in \sS_\infty(\cG)$
	for some $\lm_0\in\rho(A_0)$,
	and that $B\in \cB(\cG)$ is self-adjoint in $\cG$.
	Then the extension $\AB$ of $A$ is self-adjoint in $\cH$,
	and the formula
	\[
		(\AB - \lm)^{-1} - (A_0 -\lm)^{-1}
		= \gamma(\lm)\big (I - BM(\lm)\big)^{-1}B\gamma(\ov\lm)^*
	\]
	holds for all $\lm\in\rho(\AB)\cap\rho(A_0)$.
	In this formula the middle term satisfies
	\[
		\big (I - BM(\lm)\big)^{-1}\in \cB(\cG)
	\]
	for all $\lm\in\rho(\AB)\cap\rho(A_0)$.
\end{proposition}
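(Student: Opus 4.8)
The plan is to establish the three assertions of Proposition~\ref{prop:Krein2} in a single intertwined argument whose common engine is an explicit resolvent ansatz built from $\gamma(\lm)$, $M(\lm)$ and the inverse of $I-BM(\lm)$. I would first record the symmetry of $\AB$ together with the propagation of compactness. If $f,g\in\dom\AB=\ker(B\Gamma_1-\Gamma_0)$, then $\Gamma_0 f=B\Gamma_1 f$ and $\Gamma_0 g=B\Gamma_1 g$, so the abstract Green identity (see Definition~\ref{def:qbt}) combined with $B=B^*$ gives $(\AB f,g)_\cH-(f,\AB g)_\cH=(\Gamma_1 f,B\Gamma_1 g)_\cG-(B\Gamma_1 f,\Gamma_1 g)_\cG=0$; hence $\AB$ is symmetric, and it is densely defined since $\dom\AB\supseteq\dom S$. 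Next, from the $\gamma$-field identity $\gamma(\lm)=\gamma(\mu)+(\lm-\mu)(A_0-\mu)^{-1}\gamma(\lm)$ and \eqref{eq:g*} one obtains $M(\lm)-M(\mu)=(\lm-\mu)\gamma(\ov\mu)^*\gamma(\lm)$ and, for $\mu=\ov\lm$, the Nevanlinna relation $\operatorname{Im}M(\lm)=\operatorname{Im}(\lm)\,\gamma(\lm)^*\gamma(\lm)$. Together with the hypothesis $M(\lm_0)\in\sS_\infty(\cG)$ and the boundedness of $\gamma(\lm)$ guaranteed by $\ran\Gamma_0=\cG$, standard perturbation arguments propagate compactness of $M$ from $\lm_0$ to all of $\rho(A_0)$, so that $BM(\lm)$ is compact and $I-BM(\lm)$ is Fredholm of index zero.

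The key structural observation is a bijection between $\ker(I-BM(\lm))$ and $\ker(\AB-\lm)$. Using the decomposition $\dom T=\dom A_0\,\dot +\,\ker(T-\lm)$ valid for $\lm\in\rho(A_0)$, every $f_\lm\in\ker(T-\lm)$ is of the form $f_\lm=\gamma(\lm)\varphi$ with $\varphi=\Gamma_0 f_\lm\in\cG$, and $f_\lm\in\dom\AB$ exactly when $\varphi=B\Gamma_1\gamma(\lm)\varphi=BM(\lm)\varphi$; since $\gamma(\lm)$ is injective, $\varphi\mapsto\gamma(\lm)\varphi$ maps $\ker(I-BM(\lm))$ bijectively onto $\ker(\AB-\lm)$. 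Because the symmetric operator $\AB$ has no non-real eigenvalues, the operator $I-BM(\lm)$ is injective, and hence (being Fredholm of index zero) boundedly invertible, for every $\lm\in\CC\setminus\dR$.

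To prove self-adjointness I would then verify surjectivity of $\AB-\lm$ for such $\lm$. Given $h\in\cH$, set $f_0=(A_0-\lm)^{-1}h$, so that $\Gamma_0 f_0=0$ and, by \eqref{eq:g*}, $\Gamma_1 f_0=\gamma(\ov\lm)^*h$, and put

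\[
	f \defeq f_0 + \gamma(\lm)\bigl(I-BM(\lm)\bigr)^{-1}B\gamma(\ov\lm)^*h .
\]

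A direct computation yields $(T-\lm)f=h$, $\Gamma_0 f=\bigl(I-BM(\lm)\bigr)^{-1}B\gamma(\ov\lm)^*h$ and $B\Gamma_1 f=B\gamma(\ov\lm)^*h+BM(\lm)\Gamma_0 f=\Gamma_0 f$, so $f\in\dom\AB$ and $(\AB-\lm)f=h$. Thus $\ran(\AB\mp i)=\cH$, which together with the symmetry of $\AB$ gives self-adjointness.

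Finally, for the resolvent formula and the remaining claim I would read the very same computation on $\rho(\AB)\cap\rho(A_0)$. For such $\lm$ the operator $\AB-\lm$ is injective, so the kernel correspondence forces $\ker(I-BM(\lm))=\{0\}$, and the Fredholm property then yields $\bigl(I-BM(\lm)\bigr)^{-1}\in\cB(\cG)$; the ansatz identifies $f=(\AB-\lm)^{-1}h$, which is precisely the asserted Krein formula. The \emph{main obstacle} is the invertibility of $I-BM(\lm)$ on exactly the right parameter set: it requires marrying the Fredholm alternative—available only after the compactness of $M(\lm_0)$ has been propagated throughout $\rho(A_0)$—with the injectivity extracted from the spectral properties of $\AB$ itself, so that self-adjointness and the resolvent formula have to be bootstrapped from one another rather than proved in isolation.
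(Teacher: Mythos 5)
The paper does not prove Proposition~\ref{prop:Krein2} itself but quotes it from \cite[Theorem~2.6]{BLL13_LMS}; your argument reconstructs the standard proof from that circle of papers, and its skeleton is sound: symmetry of $\AB$ from the abstract Green identity together with $B=B^*$, the bijection $\varphi\mapsto\gamma(\lm)\varphi$ between $\ker(I-BM(\lm))$ and $\ker(\AB-\lm)$, the Fredholm alternative for $I-BM(\lm)$, and the ansatz $f=(A_0-\lm)^{-1}h+\gamma(\lm)\bigl(I-BM(\lm)\bigr)^{-1}B\gamma(\ov\lm)^*h$, which simultaneously gives surjectivity of $\AB-\lm$ (hence self-adjointness via $\ran(\AB\mp i)=\cH$) and the Krein formula. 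All of these computations check out; note that $\gamma(\lm)\psi$ is defined for every $\psi\in\cG$ precisely because $\ran\Gamma_0=\cG$, which you use implicitly.

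The one step that is not justified as written is the propagation of compactness of $M$ from $\lm_0$ to the non-real points where the Fredholm alternative is applied. Your mechanism --- $\operatorname{Im}M(\lm)=\operatorname{Im}(\lm)\,\gamma(\lm)^*\gamma(\lm)$, hence $\gamma(\lm_0)^*\gamma(\lm_0)$ and therefore $\gamma(\lm_0)$ compact, hence $\gamma(\lm)=\bigl(I+(\lm-\lm_0)(A_0-\lm)^{-1}\bigr)\gamma(\lm_0)$ and $M(\lm)=M(\lm_0)+(\lm-\lm_0)\gamma(\ov\lm)^*\gamma(\lm_0)$ compact for all $\lm\in\rho(A_0)$ --- is complete and correct when $\lm_0\notin\dR$. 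When $\lm_0$ is real, however, the identity is vacuous since $\operatorname{Im}M(\lm_0)=0$, and compactness at a single real point does not propagate in general: take an ordinary boundary triple (which satisfies Hypothesis~\ref{hyp:qbt}) with $\dim\cG=\infty$ and a real $\lm_0\in\rho(A_0)$, and replace $\Gamma_1$ by $\Gamma_1-M(\lm_0)\Gamma_0$; the new Weyl function $M(\cdot)-M(\lm_0)$ vanishes at $\lm_0$, yet at non-real points its imaginary part equals $\operatorname{Im}(\lm)\gamma(\lm)^*\gamma(\lm)$, which is boundedly invertible and hence non-compact. So your bootstrap between self-adjointness and invertibility of $I-BM(\lm)$ only gets off the ground for non-real $\lm_0$; the statement with a real $\lm_0$ needs a separate argument. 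This is harmless for the present paper, where $\wt M(\lm)$ and $\wh M(\lm)$ are compact on all of $\dC\setminus\dR_+$ by \eqref{eq:Weyl_ranges} and Lemma~\ref{le.s_emb}, but you should either restrict to $\lm_0\notin\dR$ or supply the missing propagation lemma.
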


%------------------------------------------------------------------------------
\subsection{Quasi boundary triples for coupled problems}
%------------------------------------------------------------------------------
\label{ssec:qbt_pde}

We recall particular quasi boundary triples, which are used to parameterize
the self-adjoint operators from Definition~\ref{def:Ops}.
Furthermore, we reformulate some of the abstract statements from
Subsections~\ref{ssec:qbt} and~\ref{ssec:Krein}
for these quasi boundary triples.

First, we introduce the subspace $H^{3/2}_\Delta(\dR^d\setminus\Sigma)$ of $L^2(\dR^d)$ by
\begin{equation*}\label{eq:H32}
	H^{3/2}_\Delta(\dR^d\setminus\Sigma) \defeq H^{3/2}_\Delta(\Omega_+)\oplus H^{3/2}_\Delta(\Omega_-),
\end{equation*}
where $H^{3/2}_\Delta(\Omega_\pm)$ are as in~\eqref{eq:H32pm}.
Further, to shorten the notations, we also define the
\emph{jump of the normal derivative}
by $[ \partial_\nu u ]_\Sigma \defeq \partial_{\nu_+}u_+|_\Sigma + \partial_{\nu_-}u_-|_\Sigma$
for $u\in H^{3/2}_\Delta(\dR^d\setminus\Sigma)$.
Following the lines of \cite[Section~3]{BLL13_AHP},
we define the operators $\wt T$ and $\wh T$ in $L^2(\dR^d)$ by
\begin{alignat*}{2}
	\wt T u &\defeq (-\Delta u_+)\oplus (-\Delta u_-), \qquad &
	\dom \wt T &\defeq \bigl\{u\in H^{3/2}_\Delta(\dR^d\setminus\Sigma): [u]_\Sigma=0\bigr\},
	\\[0.5ex]
	\wh T u &\defeq (-\Delta u_+)\oplus (-\Delta u_-), \qquad &
	\dom \wh T &\defeq \bigl\{u\in H^{3/2}_\Delta(\dR^d\setminus\Sigma): [\partial_\nu u]_\Sigma=0\bigr\},
\end{alignat*}
and their restrictions $\wt S$ and $\wh S$ by
\begin{align*}
	\wt S  &\defeq  \wt T\uhr \bigl\{ u\in H^{3/2}_\Delta(\dR^d\setminus\Sigma):
	u_\pm|_\Sigma = 0, ~[\partial_\nu u]_\Sigma = 0\bigr\},
	\\[1ex]
	\wh S  &\defeq \wh T\uhr \bigl\{ u\in H^{3/2}_\Delta(\dR^d\setminus\Sigma):
	\partial_{\nu_\pm} u_\pm|_\Sigma = 0,~[u]_\Sigma = 0\bigr\}.
\end{align*}
It can be  verified
that $\wt S$ (respectively, $\wh S$) is the restriction of $\Opfree$ to 	
functions, whose Dirichlet trace (respectively, Neumann trace) vanishes on
$\Sigma$. In particular, as a consequence of this identification we
arrive at the inclusions $\dom\wt S, \dom\wh S \subset H^2(\dR^d)$.
It can also be shown that
the operators $\wt S$ and $\wh S$ are closed, densely defined, and
symmetric in $L^2(\dR^d)$ and that the
closures of $\wt T$ and $\wh T$ coincide with $\wt S^*$
and $\wh S^*$, respectively.  Furthermore, we define the boundary mappings by
\begin{alignat}{3}
	\wt\Gamma_0,\wt\Gamma_1 &: \dom\wt T\to L^2(\Sigma), \qquad &
	\wt\Gamma_0 u &\defeq [\partial_\nu u]_\Sigma, \qquad &
	\wt\Gamma_1 u &\defeq u|_\Sigma,
	\label{eq:wtG01} \\[1ex]
	\wh\Gamma_0,\wh\Gamma_1 &: \dom\wh T\to L^2(\Sigma), \qquad &
	\wh\Gamma_0 u &\defeq \partial_{\nu_+}u_+|_{\Sigma}, \qquad &
	\wh\Gamma_1 u &\defeq [u]_\Sigma.
	\label{eq:whG01}
\end{alignat}	
The identities
\[
	\Opfree = \wt T\uhr \ker\wt\Gamma_0 = \wh T\uhr \ker\wh\Gamma_1
	\qquad\text{and}\qquad
	\OpN = \wh T\uhr\ker\wh\Gamma_0
\]
can be checked in a straightforward way.
According to \cite[Proposition~3.2\,(i)]{BLL13_AHP} the
triple $\wt \Pi \defeq \{L^2(\Sigma),\wt\Gamma_0,\wt\Gamma_1\}$
is a quasi boundary triple for $\wt S^*$, and by \cite[Proposition~3.8\,(i)]{BLL13_AHP}
the triple $\wh \Pi \defeq \{L^2(\Sigma),\wh\Gamma_0,\wh\Gamma_1\}$
is a quasi boundary triple for $\wh S^*$.

\begin{definition}\label{def:gM}
	Let $\wt \gamma$, $\wt M$ and $\wh\gamma$, $\wh M$ be
	the  $\gamma$-fields and the Weyl functions
	of the quasi boundary triples $\wt\Pi$ and $\wh \Pi$, respectively.
\end{definition}
\begin{remark}
	The definitions of the operator-valued functions $\wt M$ and $\wh M$
	as Neumann-to-Dirichlet maps in \eqref{def:M} and
	as Weyl functions of the quasi boundary triples
	$\wt\Pi$ and $\wh\Pi$ are equivalent;
	see \cite[Propositions~3.2\,(iii) and 3.8\,(iii)]{BLL13_AHP}.
\end{remark}

\begin{remark}
	According to \cite[Propositions~3.2\,(ii) and 3.8\,(ii)]{BLL13_AHP},
	for any $\varphi \in L^2(\Sigma)$ both \emph{transmission boundary value problems}
	\[
		\left\{\begin{alignedat}{2}
		-\Delta u  &= \lm u \qquad && \text{in}\;\; \dR^d\setminus\Sigma, \\
		[u]_\Sigma &= 0 \qquad && \text{on}\;\; \Sigma, \\
		[\partial_\nu u]_\Sigma &= \varphi \qquad && \text{on}\;\; \Sigma,
	\end{alignedat}\right.
	\qquad\qquad
	\left\{
	\begin{alignedat}{2}
		-\Delta u  &= \lm u \qquad && \text{in}\;\; \dR^d\setminus\Sigma, \\
		\partial_{\nu_+}u_{\rm +}|_\Sigma &= \varphi \qquad && \text{on}\;\; \Sigma, \\
		\partial_{\nu_-}u_-|_\Sigma &= -\varphi \qquad && \text{on}\;\; \Sigma,
	\end{alignedat}\right.
	\]
	have unique solutions
	$\wt u(\varphi),\wh u(\varphi) \in H^{3/2}_\Delta(\dR^d\setminus\Sigma)$.
	Moreover, the operator-valued functions
	$\wt\gamma$ and $\wh\gamma$
	satisfy $\wt\gamma(\lm)\varphi = \wt u_\lm(\varphi)$ and
	$\wh\gamma(\lm)\varphi = \wh u_\lm(\varphi)$ for $\varphi \in L^2(\Sigma)$
	and $\lambda\in\dC\setminus\dR_+$.
\end{remark}
Thanks to \eqref{eq:g*} the adjoints of $\wt\gamma(\ov\lm)$ and $\wh\gamma(\ov\lm)$
can be expressed as
\begin{equation}\label{eq:gamma_adjoints}
	\wt\gamma(\ov\lm)^* = \wt\Gamma_1(\Opfree - \lm)^{-1}
	\qquad\text{and}\qquad
	\wh\gamma(\ov\lm)^* = \wh\Gamma_1(\OpN - \lm)^{-1}
\end{equation}
for $\lm\in\dC\setminus\dR_+$.
We also remark that, by \cite[Propositions~3.2\,(iii) and 3.8\,(iii)]{BLL13_AHP},
we have
\begin{equation}\label{eq:Weyl_ranges}
	\ran\wt M(\lm) = \ran \wh M(\lm) = H^1(\Sigma),\qquad 	
	\lm\in\dC\setminus\dR_+.
\end{equation}
According to Proposition~\ref{prop:Krein1}, the formula
\begin{equation}\label{eq:Krein1}
	(\OpN -\lm)^{-1} - (\Opfree -\lm)^{-1}
	= \wh\gamma(\lm)\wh M(\lm)^{-1}\wh\gamma(\ov\lm)^*,
\end{equation}
holds for all $\lm\in\dC\setminus\dR_+$.
Since the operators of multiplication with $\alpha$ and $\omega$ are bounded
and self-adjoint in $L^2(\Sigma)$,
by Proposition~\ref{prop:Krein2} the
extensions
\[
	\wt T\uhr \ker(\alpha\wt\Gamma_1 - \wt \Gamma_0)
	\qquad\text{and}\qquad
	\wh T\uhr\ker(\omega\wh\Gamma_1 - \wh \Gamma_0)
\]
are self-adjoint in $L^2(\dR^d)$.  In a way similar to \cite{BLL13_AHP},
one can check that these restrictions coincide with $\Op$ and $\Opp$, respectively.
Moreover, by Proposition~\ref{prop:Krein2}, the formulae
\begin{subequations}
\begin{align}      	
	(\Op-\lm)^{-1}-(\Opfree-\lm)^{-1}
	&= \wt\gamma(\lm)\,\bigl(I - \alpha\wt M(\lm)\bigr)^{-1}\alpha\,\wt\gamma(\ov\lm)^*,
	\label{eq:Krein2} \\[0.5ex]
	(\Opp-\lm)^{-1}-(\OpN-\lm )^{-1}
	&= \wh\gamma(\lm)\,\bigl(I - \omega\wh M(\lm)\bigr)^{-1}\omega\,\wh\gamma(\ov\lm)^*,
	\label{eq:Krein3}
\end{align}	
\end{subequations}
hold for all $\lm\in\rho(\Op)$ and all $\lm\in\rho(\Opp)$,
respectively. In these formulae the middle terms on the right-hand sides satisfy
\begin{equation}\label{eq:bnd_inv}
	\bigl(I-\alpha\wt M(\lm)\bigr)^{-1}, \bigl(I-\omega \wh M(\lm)\bigr)^{-1}\in \cB(L^2(\Sigma))
\end{equation}	
for $\lm$ in the respective resolvent sets.

%*************************************************************
\section{Proofs of the main results}\label{sec:proofs}
%*************************************************************

In this section we prove the main results of the paper: the trace formulae
for the Schr\"odinger operators with singular interactions.
Theorems~\ref{thm1} and \ref{thm2} are proved in Subsections~\ref{ssec:proof_thm1}
and \ref{ssec:proof_thm2}, respectively.
Throughout this section we use the notations
$R(\lm) \defeq (\Opfree - \lm)^{-1}$ and
$R_{\rm N}(\lm) \defeq (\OpN - \lm)^{-1}$.

%--------------------------------------------------------------------------
\subsection{Proof of Theorem~\ref{thm1}}
%--------------------------------------------------------------------------
\label{ssec:proof_thm1}

To prove Theorem~\ref{thm1} we need an auxiliary lemma.

\begin{lemma}\label{lem:der1}
	Let the $\gamma$-field $\wt\gamma$ and the Weyl function $\wt M$
	be as in Definition~\ref{def:gM}.
	Then for every $\lm \in \dC\setminus\dR_+$ and every $k\in\dN_0$ the
	following relations hold:
	\begin{myenum}
	\item $\wt\gamma^{(k)}(\lm),
	\frac{\dd^k}{\dd\lm^k}\wt\gamma(\ov\lm)^*\in \sS_{\frac{d-1}{2k+3/2},\infty}$;
	\item $\wt M^{(k)}(\lambda) \in \sS_{\frac{d-1}{2k+1},\infty}$.
	\end{myenum}
\end{lemma}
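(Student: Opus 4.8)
The plan is to reduce both statements to the smoothing estimate of Lemma~\ref{le.s_emb}, exploiting the explicit formulae \eqref{der:g}--\eqref{der:M} for the derivatives together with the adjoint representation $\wt\gamma(\ov\lm)^* = \wt\Gamma_1(\Opfree-\lm)^{-1}$ from \eqref{eq:gamma_adjoints}.

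For part~(i) I would start with $\frac{\dd^k}{\dd\lm^k}\wt\gamma(\ov\lm)^*$. Combining \eqref{der:g*} with \eqref{eq:gamma_adjoints} yields the operator identity
\[
  \frac{\dd^k}{\dd\lm^k}\wt\gamma(\ov\lm)^*
  = k!\,\wt\gamma(\ov\lm)^*(\Opfree-\lm)^{-k}
  = k!\,\wt\Gamma_1(\Opfree-\lm)^{-(k+1)}.
\]
Since $\Opfree=-\Delta$ has $\dom\Opfree^{k+1}=H^{2(k+1)}(\dR^d)$, elliptic regularity shows that $(\Opfree-\lm)^{-(k+1)}$ maps $L^2(\dR^d)$ into $H^{2k+2}(\dR^d)$, and the sharp trace theorem sends this space into $H^{2k+3/2}(\Sigma)$. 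Hence $\ran\big(\frac{\dd^k}{\dd\lm^k}\wt\gamma(\ov\lm)^*\big)\subset H^{2k+3/2}(\Sigma)$, and Lemma~\ref{le.s_emb} with $r=2k+3/2$ gives membership in $\sS_{\frac{d-1}{2k+3/2},\infty}$. To transfer this to $\wt\gamma^{(k)}(\lm)$, I would note from \eqref{der:g} and \eqref{der:g*} that $\big(\wt\gamma^{(k)}(\lm)\big)^*$ equals $\frac{\dd^k}{\dd\mu^k}\wt\gamma(\ov\mu)^*$ evaluated at $\mu=\ov\lm$; since $\ov\lm\in\dC\setminus\dR_+$ as well and an operator has the same singular values as its adjoint, the class membership carries over.

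For part~(ii) the case $k=0$ is immediate: by \eqref{eq:Weyl_ranges} the operator $\wt M(\lm)$ has range $H^1(\Sigma)$, so Lemma~\ref{le.s_emb} with $r=1$ gives $\wt M(\lm)\in\sS_{d-1,\infty}=\sS_{\frac{d-1}{1},\infty}$. For $k\ge1$ I would factorise \eqref{der:M}: for any $p,q\ge0$ with $p+q=k-1$ the formulae \eqref{der:g} and \eqref{der:g*} give
\[
  \wt M^{(k)}(\lm)
  = \frac{k!}{p!\,q!}\Big(\frac{\dd^p}{\dd\lm^p}\wt\gamma(\ov\lm)^*\Big)\,\wt\gamma^{(q)}(\lm),
\]
an identity valid first on the dense subspace $\ran\wt\Gamma_0$ and then on all of $L^2(\Sigma)$ by boundedness of both sides. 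By part~(i) the two factors lie in $\sS_{\frac{d-1}{2p+3/2},\infty}$ and $\sS_{\frac{d-1}{2q+3/2},\infty}$; writing these as $\sS_{1/s,\infty}$ and $\sS_{1/t,\infty}$ and applying the product rule \eqref{prod_Sp}, one finds $s+t=\frac{(2p+3/2)+(2q+3/2)}{d-1}=\frac{2k+1}{d-1}$, whence $\wt M^{(k)}(\lm)\in\sS_{\frac{d-1}{2k+1},\infty}$.

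The step requiring most care is fixing the exact Sobolev order $2k+3/2$ in part~(i): it rests on combining the identification $\dom\Opfree^{k+1}=H^{2k+2}(\dR^d)$ with the sharp trace theorem, which loses precisely half a derivative, and it is this half-integer loss that produces the $3/2$ and hence all the stated indices. The remaining ingredients — the adjoint identity for $\wt\gamma^{(k)}(\lm)$, the index bookkeeping in the product rule, and the density argument legitimising the factorisation of $\wt M^{(k)}(\lm)$ — are then routine.
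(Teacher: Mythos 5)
Your proposal is correct and follows essentially the same route as the paper: the mapping property $\wt\Gamma_1(\Opfree-\lm)^{-(k+1)}:L^2(\dR^d)\to H^{2k+3/2}(\Sigma)$ combined with Lemma~\ref{le.s_emb} for part~(i) (with the adjoint/singular-value symmetry to transfer the class to $\wt\gamma^{(k)}(\lm)$), and the factorisation of $\wt M^{(k)}(\lm)$ via \eqref{der:M} together with the product rule \eqref{prod_Sp} for part~(ii). The only cosmetic difference is that the paper uses the specific split corresponding to $p=k-1$, $q=0$ in your factorisation, i.e.\ $\wt M^{(k)}(\lm)=k!\,\wt\gamma(\ov\lm)^*R(\lm)^{k-1}\wt\gamma(\lm)\in\sS_{\frac{d-1}{2(k-1)+3/2},\infty}\cdot\sS_{\frac{d-1}{3/2},\infty}$, which gives the same index $\frac{d-1}{2k+1}$.
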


\begin{proof}
	{\rm (i)}
	Let $\lm\in\dC\setminus\dR_+$ and $k\in\dN_0$.  First, we observe that
	$\ran(R(\lm)^k) \subset H^{2k}(\dR^d)$.
	By the trace theorem we have
	$u|_{\Sigma} \in H^{s-1/2}(\Sigma)$ for every $u\in H^s(\dR^d)$
	with $s > 1/2$.
	Hence, we obtain from~\eqref{eq:gamma_adjoints} that
	\[
		\ran\bigl(
		\wt\gamma(\ov\lambda)^*R(\lm)^k\bigr)
		\subset H^{2k+3/2}( \Sigma ).
	\]
	Thus Lemma~\ref{le.s_emb} with $\cK = L^2(\dR^d)$ and $r = 2k+3/2$ implies that
	\begin{equation}\label{gres_SvN}
		\wt\gamma(\ov\lambda)^*R(\lm)^k \in \sS_{\frac{d-1}{2k+3/2},\infty}.
	\end{equation}
	By taking the adjoint in~\eqref{gres_SvN} and replacing $\lm$ by $\ov\lm$ we obtain
	\begin{equation}\label{resg_SvN}
		R(\lm)^k
		\wt\gamma(\lm)\in
		\sS_{\frac{d-1}{2k+3/2},\infty}.
	\end{equation}
	From \eqref{der:g}, \eqref{der:g*}, \eqref{gres_SvN} and \eqref{resg_SvN}
	we now obtain
	$\wt\gamma^{(k)}(\lm), \frac{\dd^k}{\dd\lm^k}\wt\gamma(\ov\lm)^*\in \sS_{\frac{d-1}{2k+3/2},\infty}$.
	
	{\rm (ii)}
	For $k=0$ we observe that by~\eqref{eq:Weyl_ranges} we have
	$\ran \wt M(\lm) = H^1(\Sigma)$.
	Therefore, Lemma~\ref{le.s_emb} with $\cK = L^2(\Sigma)$ and $r = 1$
	implies that $\wt M(\lm)\in\sS_{d-1,\infty}$.
	For $k \ge 1$ we derive from~\eqref{der:M} that
	\[
		\wt M^{(k)}(\lm)
		=
		k!\,\wt\gamma(\ov\lambda)^*R(\lm)^{k-1}
		\wt\gamma(\lm)\in
		\sS_{\frac{d-1}{2(k-1)+3/2},\infty}
		\cdot\sS_{\frac{d-1}{3/2},\infty}
		 =
		\sS_{\frac{d-1}{2k+1},\infty},
	\]
	where we applied \eqref{gres_SvN}, \eqref{resg_SvN}
	and \eqref{prod_Sp}.
\end{proof}

\label{ssec:thm1}

\begin{proof}[Proof of Theorem~\ref{thm1}]
	In order to shorten notation and to avoid
	the distinction of several cases, we set
	\[
		\sA_r
		\defeq
		\begin{cases}
		    \sS_{\frac{d-1}{r},\infty}\bigl(L^2(\Sigma)\bigr) & \text{if}\; r>0, \\[1ex]
	   		\cB\bigl(L^2(\Sigma)\bigr) & \text{if}\; r=0.
		\end{cases}
	\]
	It follows from~\eqref{prod_Sp}
	and the fact that $\frS_{p,\infty}(L^2(\Sigma))$ is an
	ideal in $\cB(L^2(\Sigma))$ for $p>0$ that
	\begin{equation}\label{prodAr}
		\frA_{r_1}\cdot\frA_{r_2} = \frA_{r_1+r_2}, \qquad r_1,r_2 \ge0.
	\end{equation}
	The remainder of the proof is divided into two steps.
	% -----
	\medskip

	\noindent
	\textit{Step 1.}
	Let $\alpha\in L^\infty(\Sigma;\dR)$ and set
	\[
		\wt T(\lm) \defeq \bigl(I-\alpha\wt M(\lm)\bigr)^{-1}, \qquad \lm\in\rho(\Op),
	\]
	where $\wt T(\lm)\in\cB(L^2(\Sigma))$ by \eqref{eq:bnd_inv}.
	Next, we show that
	\begin{equation}\label{TkinAk}
		 \wt T^{(k)}(\lambda) \in \frA_{2k+1}, \qquad k\in\NN,
	\end{equation}
	by induction.  Relation~\eqref{rule3} implies that
	\begin{equation}\label{Tder}
		\wt T'(\lm) = \wt T(\lm)\alpha \wt M'(\lm) \wt T(\lm),
	\end{equation}
	which is in $\frA_3$ by Lemma~\ref{lem:der1}\,(ii).
	Let $m\in\NN$ and assume that~\eqref{TkinAk} is true
	for every $k=1,\dots,m$, which implies, in particular, that
	\begin{equation}\label{TkinA2k}
	 	\wt T^{(k)}(\lambda) \in \frA_{2k}, \qquad k=0,\dots,m.
	\end{equation}
	Then
	\begin{align*}
		\wt T^{(m+1)}(\lm)
		&=
		\frac{\dd^m}{\dd\lm^m}\Bigl(\wt T(\lm)\alpha \wt M'(\lm)\wt T(\lm)\Bigr)
		\\[1ex]
		&=
		\sum_{\substack{p+q+r=m\\ p,q,r\ge0}}
		\frac{m!}{p!\,q!\,r!} \wt T^{(p)}(\lm)\alpha \wt M^{(q+1)}(\lm)\wt T^{(r)}(\lm)
	\end{align*}
	by~\eqref{Tder} and \eqref{rule2}.
	Relation \eqref{TkinA2k}, the boundedness of $\alpha$,
	Lemma~\ref{lem:der1}\,(ii) and \eqref{prodAr} imply that
	\[
		\wt T^{(p)}(\lm)\alpha \wt M^{(q+1)}(\lm)\wt T^{(r)}(\lm)
		\in \frA_{2p}\cdot\frA_{2(q+1)+1}\cdot\frA_{2r}
		=
		\frA_{2(m+1)+1},
	\]
	since $p+q+r=m$. This shows \eqref{TkinAk} for $k=m+1$ and hence,
	by induction, for all $k\in\NN$. Since $\wt T(\lm)\in\cB(L^2(\Sigma))$,
	we have, in particular,
	\begin{equation}\label{TkinAk_weak}
		\wt T^{(k)}(\lm) \in \frA_{2k}, \qquad k\in\NN_0,\;	
		\lm\in\rho(\Op).
	\end{equation}
	%
	% -----

	\noindent
	\textit{Step 2.}
	By taking derivatives we obtain from \eqref{eq:Krein2} that, for $m\in\NN$,
	\begin{align}
		\notag
		(m-1)!\, \wt D_{\alpha, m}(\lm)
		& = \frac{\dd^{m-1}}{\dd\lm^{m-1}}\big(\wt D_{\alpha, 1}(\lm)\big) =
		\frac{\dd^{m-1}}{\dd\lm^{m-1}}\Bigl(\wt\gamma(\lm)\wt T(\lm)\alpha\wt\gamma(\ov\lm)^*\Bigr) \\
		& =
		\sum_{\substack{p+q+r=m-1\\ p,q,r\ge0}}\frac{(m-1)!}{p!\,q!\,r!}
		\wt\gamma^{(p)}(\lm)\wt T^{(q)}(\lm)\alpha\frac{\dd^r}{\dd\lm^r}
		\wt\gamma(\ov\lm)^*.
		\label{sum594}
	\end{align}
	By Lemma~\ref{lem:der1}\,(i) and~\eqref{TkinAk_weak},
	each term in the sum satisfies
	\begin{equation}\label{terms_in_Sp2}
		\wt\gamma^{(p)}(\lm)\wt T^{(q)}(\lm)\alpha
		\frac{\dd^r}{\dd\lm^r}\wt\gamma(\ov\lm)^*
		\in \frA_{2p+3/2}\cdot\frA_{2q}\cdot\frA_{2r+3/2}
		= \frA_{2m +1} = \frS_{\frac{d-1}{2m+1},\infty}.
	\end{equation}
	If $m\in\dN$ is such that  $m > \frac{d-2}{2}$, then $\frac{d-1}{2m+1} < 1$ and,
	by \eqref{Sp_con_Spinf} and \eqref{terms_in_Sp2}, all terms in the sum
	in~\eqref{sum594}
	are trace class operators, and the same is true if we change the order in the 		
	product in \eqref{terms_in_Sp2}.
	Hence, we can apply the trace to the expression in~\eqref{sum594} and
	use \eqref{trace1}, \eqref{trace2} and \eqref{der:M} to obtain
	\begin{align*}
	 	& (m-1)!\,\Tr\big(\wt D_{\alpha, m}(\lm)\big)
		=
		\Tr\Biggl(\,\sum_{\substack{p+q+r=m-1 \\ p,q,r\ge0}}\frac{(m-1)!}{p!\,q!\,r!}
		\wt\gamma^{(p)}(\lm) \wt T^{(q)}(\lm)\alpha \frac{\dd^r}{\dd\lm^r}\wt\gamma(\ov\lm)^*\Biggr)\\
		& =
		\sum_{\substack{p+q+r=m-1 \\ p,q,r\ge0}}
		\frac{(m-1)!}{p!\,q!\,r!}
		\Tr\Bigl(\wt\gamma^{(p)}(\lm)\wt T^{(q)}(\lm)
		\alpha\frac{\dd^r}{\dd\lm^r}\wt\gamma(\ov\lm)^*\Bigr) \\
		&=
		\sum_{\substack{p+q+r=m-1 \\ p,q,r\ge0}}
		\frac{(m-1)!}{p!\,q!\,r!}
		\Tr\biggl(\wt T^{(q)}(\lm)\alpha\Bigl(\frac{\dd^r}{\dd\lm^r}
		\wt\gamma(\ov\lm)^*\Bigr)	\wt\gamma^{(p)}(\lambda)\biggr) \\
		& =
		\Tr\Biggl(\, \sum_{\substack{p+q+r=m-1 \\ p,q,r\ge0}}
		\frac{(m-1)!}{p!\,q!\,r!}
		\wt T^{(q)}(\lm)\alpha
		\Bigl(\frac{\dd^r}{\dd\lm^r}\wt\gamma(\ov\lambda)^*\Bigr)
		\wt\gamma^{(p)}(\lm)
		\Biggr) \\
		& =
		\Tr\biggl(
		\frac{\dd^{m-1}}{\dd\lm^{m-1}}
		\Bigl(\wt T(\lambda)\alpha\wt\gamma(\ov\lm)^*\wt\gamma(\lm)
		\Bigr)  \biggr)
		=
		\Tr\biggl(\frac{\dd^{m-1}}{\dd\lm^{m-1}}
		\Bigl(\wt T(\lm)\alpha\wt M'(\lm)\Bigr)\biggr),
	\end{align*}
	which finishes the proof.
\end{proof}

%-------------------------------------------------------------
\subsection{Proof of Theorem~\ref{thm2}}
\label{ssec:proof_thm2}
%-------------------------------------------------------------

First, we need three preparatory lemmas.
The proof of the first of them is completely analogous to the proof of
Lemma~\ref{lem:der1} and is therefore omitted.	

\begin{lemma}\label{lem:der2}
	Let the $\gamma$-field $\wh\gamma$ and the Weyl function $\wh M$ be as in
	Definition~\ref{def:gM}.
	Then for every $\lm \in \dC\setminus\dR_+$ and every $k\in\dN_0$
	the following relations hold:
	\begin{myenum}
		\item
		$\wh\gamma^{(k)}(\lm),
		\frac{\dd^k}{\dd\lm^k}\wh\gamma(\ov\lambda)^*\in
		\sS_{\frac{d-1}{2k+3/2},\infty}$;
		\item $\wh M^{(k)}(\lambda) \in \sS_{\frac{d-1}{2k+1},\infty}$.
	\end{myenum}
\end{lemma}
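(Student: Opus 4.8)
The plan is to mirror the proof of Lemma~\ref{lem:der1} essentially verbatim, replacing the "tilde" objects $\wt\gamma$, $\wt M$, $\Opfree$, $R(\lm)$ by the "hat" objects $\wh\gamma$, $\wh M$, $\OpN$, $R_{\rm N}(\lm)$. The crucial structural fact is that both quasi boundary triples $\wt\Pi$ and $\wh\Pi$ enjoy identical smoothing properties: in \eqref{eq:Weyl_ranges} we have $\ran\wh M(\lm)=H^1(\Sigma)$, exactly as for $\wt M$, and the adjoint formula \eqref{eq:gamma_adjoints} reads $\wh\gamma(\ov\lm)^*=\wh\Gamma_1(\OpN-\lm)^{-1}$, the analogue of the identity used in Lemma~\ref{lem:der1}\,(i). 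The only potential discrepancy is that here $\wh\Gamma_1 u=[u]_\Sigma$ and the reference operator is the Neumann Laplacian $\OpN$ rather than $\Opfree$, but these differences do not affect the Sobolev-order bookkeeping, which is all the argument depends on.

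First I would establish part~(i). For $\lm\in\dC\setminus\dR_+$ and $k\in\dN_0$, I note that $\ran\bigl(R_{\rm N}(\lm)^k\bigr)\subset H^{2k}(\dR^d\setminus\Sigma)$ by elliptic regularity for the Neumann problem on each domain $\Omega_\pm$; the trace theorem then gives, for each piece, a gain of $-1/2$ in Sobolev order, so that the boundary map $\wh\Gamma_1$ applied to $R_{\rm N}(\lm)^k$ lands in $H^{2k+3/2}(\Sigma)$. Using \eqref{eq:gamma_adjoints} this yields $\ran\bigl(\wh\gamma(\ov\lm)^*R_{\rm N}(\lm)^k\bigr)\subset H^{2k+3/2}(\Sigma)$, whence Lemma~\ref{le.s_emb} with $r=2k+3/2$ gives
\[
	\wh\gamma(\ov\lm)^*R_{\rm N}(\lm)^k\in\sS_{\frac{d-1}{2k+3/2},\infty}.
\]
Taking adjoints and replacing $\lm$ by $\ov\lm$ yields the companion statement $R_{\rm N}(\lm)^k\wh\gamma(\lm)\in\sS_{\frac{d-1}{2k+3/2},\infty}$, and then the derivative formulae \eqref{der:g} and \eqref{der:g*}, which hold verbatim for the hat triple since its $\gamma$-field and Weyl function are those of a quasi boundary triple, deliver the claimed membership of $\wh\gamma^{(k)}(\lm)$ and $\frac{\dd^k}{\dd\lm^k}\wh\gamma(\ov\lm)^*$.

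For part~(ii), the case $k=0$ follows directly from $\ran\wh M(\lm)=H^1(\Sigma)$ together with Lemma~\ref{le.s_emb} applied with $r=1$, giving $\wh M(\lm)\in\sS_{d-1,\infty}$. For $k\ge1$ I would invoke the derivative formula \eqref{der:M}, which expresses $\wh M^{(k)}(\lm)=k!\,\wh\gamma(\ov\lm)^*R_{\rm N}(\lm)^{k-1}\wh\gamma(\lm)$, and factor this as a product of the two pieces bounded in part~(i): the first factor lies in $\sS_{\frac{d-1}{2(k-1)+3/2},\infty}$ and the second in $\sS_{\frac{d-1}{3/2},\infty}$, so the product rule \eqref{prod_Sp} for weak Schatten--von Neumann ideals gives $\wh M^{(k)}(\lm)\in\sS_{\frac{d-1}{2k+1},\infty}$, which is exactly the asserted class. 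I do not expect any genuine obstacle here; since the proof of Lemma~\ref{lem:der1} already accounts for the Neumann-trace regularity on $H^{3/2}_\Delta(\Omega_\pm)$ and the $H^1(\Sigma)$ mapping property of the Weyl function, the entire argument transfers without modification, which is precisely why the authors remark that it is completely analogous and omit it.
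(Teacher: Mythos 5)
Your proposal is correct and is precisely the argument the paper intends: the authors state that the proof of Lemma~\ref{lem:der2} is completely analogous to that of Lemma~\ref{lem:der1} and omit it, and your substitution of $\OpN$, $R_{\rm N}(\lm)$, $\wh\Gamma_1$, $\wh\gamma$, $\wh M$ for the corresponding tilde objects, together with \eqref{eq:gamma_adjoints}, \eqref{eq:Weyl_ranges}, Lemma~\ref{le.s_emb}, the derivative formulae \eqref{der_gamma_field_Weyl_function} and the product rule \eqref{prod_Sp}, reproduces it faithfully. The one point you rightly flag --- that $\wh\Gamma_1$ is the jump $[u]_\Sigma$ and the reference operator is the orthogonal sum of Neumann Laplacians, so elliptic regularity must be applied on each of $\Omega_\pm$ separately before taking traces --- is handled correctly and does not change the Sobolev bookkeeping.
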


\begin{lemma}
\label{lem:mapping}
	Let the $\gamma$-field $\wh\gamma$ and the Weyl function $\wh M$ be as in
	Definition~\ref{def:gM}.
	Then for all $s\ge 0$, and all $\lm\in\dC\setminus\dR_+$ the following
	statements hold:
	\begin{myenum}
		\item $\ran\bigl(\wh\gamma(\ov\lm)^*\uhr  H^s(\dR^d)\bigr)
		\subset H^{s+\frac32}(\Sigma)$;
		\item $\ran\bigl(\wh M(\lm)\uhr H^s(\Sigma)\bigr) = H^{s+1}(\Sigma)$.
	\end{myenum}
\end{lemma}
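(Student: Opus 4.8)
The plan is to prove both mapping statements of Lemma~\ref{lem:mapping} by reducing them to regularity properties of the Neumann-to-Dirichlet maps $M_\pm(\lm)$ on the individual domains $\Omega_\pm$ and to the trace theorem on $\dR^d$. The two parts are closely related: part~(i) concerns the smoothing action of the adjoint $\gamma$-field, while part~(ii) is a statement about the range of $\wh M(\lm)$. Throughout I would use the representation \eqref{eq:gamma_adjoints} for $\wh\gamma(\ov\lm)^*$ and the definition $\wh M(\lm) = M_+(\lm) + M_-(\lm)$ from \eqref{def:M}.

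For part~(i), first I would take $u \in H^s(\dR^d)$ and recall that $\wh\gamma(\ov\lm)^* = \wh\Gamma_1(\OpN-\lm)^{-1}$, where $\wh\Gamma_1 u = [u]_\Sigma$ by \eqref{eq:whG01}. The key observation is that applying the resolvent $(\OpN - \lm)^{-1}$ of the Neumann Laplacian gains two orders of Sobolev regularity on each domain $\Omega_\pm$, so that $(\OpN-\lm)^{-1}u \in H^{s+2}(\Omega_+) \oplus H^{s+2}(\Omega_-)$; here I would invoke elliptic regularity up to the boundary for the Neumann problem. Then the Dirichlet trace theorem on the smooth hypersurface $\Sigma$ maps $H^{s+2}(\Omega_\pm)$ into $H^{s+3/2}(\Sigma)$, and since the jump $[\,\cdot\,]_\Sigma$ is a difference of two such traces, we conclude $\ran\bigl(\wh\gamma(\ov\lm)^*\uhr H^s(\dR^d)\bigr) \subset H^{s+3/2}(\Sigma)$, as claimed.

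For part~(ii), I would argue that each Neumann-to-Dirichlet map $M_\pm(\lm)$ maps $H^s(\Sigma)$ onto $H^{s+1}(\Sigma)$. This follows from elliptic regularity for the boundary value problem defining $u_{\lm,\pm}(\varphi)$: given Neumann data $\varphi \in H^s(\Sigma)$, the solution gains one order of regularity near the boundary relative to the generic $H^{3/2}_\Delta(\Omega_\pm)$ case, so its Dirichlet trace lies in $H^{s+1}(\Sigma)$; conversely every element of $H^{s+1}(\Sigma)$ arises this way since $M_\pm(\lm)$ is a bijection between the corresponding scales. Because $\wh M(\lm) = M_+(\lm) + M_-(\lm)$, the range statement for $\wh M(\lm)$ on $H^s(\Sigma)$ follows once the two summands have matching ranges $H^{s+1}(\Sigma)$; the equality of ranges (rather than mere inclusion) also uses that $\wh M(\lm)$ is, up to lower-order smoothing, an elliptic pseudodifferential operator of order $-1$ on $\Sigma$, which is consistent with the base case $s=0$ already recorded in \eqref{eq:Weyl_ranges}.

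The main obstacle I anticipate is establishing the \emph{surjectivity} in part~(ii) rather than the inclusion. The inclusion $\ran\bigl(\wh M(\lm)\uhr H^s(\Sigma)\bigr) \subset H^{s+1}(\Sigma)$ is a direct consequence of elliptic regularity, but proving that this range is all of $H^{s+1}(\Sigma)$ requires knowing that $\wh M(\lm)$ acts as an isomorphism between the Sobolev scales $H^s(\Sigma)$ and $H^{s+1}(\Sigma)$ for every $s \ge 0$. The cleanest route is to identify $\wh M(\lm)$ as a classical elliptic pseudodifferential operator of order $-1$ on the closed manifold $\Sigma$ with positive-order principal symbol, from which the isomorphism on every Sobolev scale follows from standard elliptic theory together with injectivity of $\wh M(\lm)$; alternatively one can bootstrap from the case $s=0$ in \eqref{eq:Weyl_ranges} by a duality and interpolation argument combined with the inclusion already obtained.
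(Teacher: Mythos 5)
Your part (i) coincides with the paper's argument: both start from $\wh\gamma(\ov\lm)^*=\wh\Gamma_1(\OpN-\lm)^{-1}$, use elliptic regularity for the Neumann resolvent (the paper cites the regularity shift property \cite[Theorem~4.20]{McL}) and then the trace theorem. For part (ii) you take a genuinely different route. The paper stays inside the quasi-boundary-triple framework: the inclusion $\ran\bigl(\wh M(\lm)\uhr H^s(\Sigma)\bigr)\subset H^{s+1}(\Sigma)$ follows because $\wh\Gamma_0$ restricted to $\ker(\wh T-\lm)\cap H^{s+3/2}(\dR^d\setminus\Sigma)$ is a bijection onto $H^s(\Sigma)$, so $\wh\gamma(\lm)$ maps $H^s(\Sigma)$ into $H^{s+3/2}(\dR^d\setminus\Sigma)$ and one applies the trace theorem to $\wh\Gamma_1$; surjectivity is obtained constructively from the second decomposition $\dom\wh T=\dom\Opfree\dotplus\ker(\wh T-\lm)$, which for given $\psi\in H^{s+1}(\Sigma)$ produces $f_\lm\in\ker(\wh T-\lm)\cap H^{s+3/2}(\dR^d\setminus\Sigma)$ with $[f_\lm]_\Sigma=\psi$, so that $\varphi\defeq\wh\Gamma_0 f_\lm\in H^s(\Sigma)$ is an explicit preimage. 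You instead reduce to the one-sided maps $M_\pm(\lm)$ and then invoke the elliptic pseudodifferential calculus. That route can be made to work: the Neumann-to-Dirichlet maps are classical elliptic pseudodifferential operators of order $-1$ with principal symbol $|\xi|^{-1}$, so $\wh M(\lm)$ is elliptic of order $-1$, and together with the injectivity of $\wh M(\lm)$ and of its adjoint $\wh M(\ov\lm)$ this gives bijectivity from $H^s(\Sigma)$ onto $H^{s+1}(\Sigma)$; but it imports machinery the paper deliberately avoids, and it is what buys the paper its more elementary, self-contained proof.

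Two caveats on your write-up. First, the step ``the range statement for $\wh M(\lm)$ follows once the two summands have matching ranges'' is a non sequitur: two operators can each map $H^s(\Sigma)$ onto $H^{s+1}(\Sigma)$ while their sum does not (the sum can even vanish), so the individual surjectivity of $M_\pm(\lm)$ contributes nothing to the surjectivity of $\wh M(\lm)$; the entire burden is carried by the ellipticity-plus-injectivity argument you append, and the proof should be organized around that. Second, the fallback you offer (``duality and interpolation from the case $s=0$'') is not adequate: interpolation only reaches exponents between endpoints already under control, so it cannot produce the claim for arbitrarily large $s$ from the single case $s=0$ together with the inclusion.
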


\begin{proof}
(i)
According to \eqref{eq:gamma_adjoints} we have
\begin{equation*}
  \wh\gamma(\ov\lm)^* = \wh\Gamma_1 R_{\rm N}(\lm).
\end{equation*}
Employing  the regularity shift property \cite[Theorem~4.20]{McL}
and the trace theorem \cite[Theorem~3.37]{McL} we conclude that
\begin{equation*}
	\ran \bigl(\wh\gamma(\ov\lambda)^* \uhr H^s(\dR^d)\bigr) \subset
	 H^{s + \frac32}(\Sigma)
\end{equation*}
holds for all $s\geq 0$.

(ii)
Define the space $H^s(\dR^d\setminus\Sigma) \defeq H^s(\Omega_+)\oplus H^s(\Omega_-)$.
It follows from the decomposition
$\dom \wh T=\dom \OpN\dotplus\ker(\wh T-\lambda)$, $\lambda\in\dC\setminus\dR_+$,
and the properties of the Neumann trace \cite[\S2.7.3]{LM72-I}
that the restriction of the mapping $\wh\Gamma_0$ to
\begin{equation*}
  \ker(\wh T-\lambda)\cap H^{s+\frac32}(\dR^d\setminus\Sigma)
\end{equation*}
is a bijection onto $H^s(\Sigma)$ for $s \ge 0$.
This, together with the definition of the $\gamma$-field, implies that
\begin{equation*}
  \ran \bigl(\wh\gamma(\lm)\uhr H^s(\Sigma)\bigr)
  =  \ker(\wh T-\lambda)\cap
   H^{s+\frac32}(\dR^d\setminus\Sigma)	
  \subset  H^{s+\frac32}(\dR^d\setminus\Sigma).
\end{equation*}
Hence, it follows from the definition of $\wh M(\lambda)$,
the definition of $\wh\Gamma_1$ in \eqref{eq:whG01} and the trace theorem that
\[
	\ran(\wh M(\lm)\uhr H^s(\Sigma))
	\subset H^{s+1}(\Sigma).
\]
To verify the opposite inclusion, let $\psi\in H^{s+1}(\Sigma)$.
The decomposition $\dom \wh T = \dom \Opfree\dotplus\ker(\wh T-\lm)$,
$\lm\in\dC\setminus\dR_+$ implies that there exists a function
$f_\lm\in\ker(\wh T-\lambda)\cap H^{s+\frac32}(\dR^d\setminus\Sigma)$
such that $\wh \Gamma_1 f_\lambda = \psi$.  Thus,
\begin{equation*}
  \wh \Gamma_0 f_\lambda = \varphi\in H^s(\Sigma)
  \qquad\text{and}\qquad \wh M(\lambda)\varphi=\psi,
\end{equation*}
that is, $H^{s+1}(\Sigma)\subset\ran\bigl(\wh M(\lambda)\uhr H^s(\Sigma)\bigr)$,
and the assertion is shown.
\end{proof}

\begin{lemma}\label{lem:aux}
	Let the self-adjoint operators $\Opfree$ and $\OpN$ be as in Definition~\ref{def:Ops},
	and let the operator-valued function $\wh M$ be as in~\eqref{def:M}.
	Then for all $m \in \dN$ such that $m  > \frac{d-1}{2}$ and
	all $\lm\in\dC\setminus\dR_+$ the resolvent power difference
	\[
		\wh D_m( \lm ) \defeq ( \OpN -\lm )^{-m} - ( \Opfree - \lm )^{-m}
	\]
	belongs to the trace class, and its trace can be expressed as
	\[
		\Tr\bigl(\wh D_m(\lm)\bigr)
		= \frac{1}{(m-1)!}\Tr\biggl(\frac{\dd^{m-1}}{\dd\lm^{m-1}}
		\bigl(\wh M(\lm)^{-1}\wh M'(\lm)\bigr)\biggr).
	\]
\end{lemma}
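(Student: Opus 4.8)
The plan is to start from the first-order Krein formula \eqref{eq:Krein1},
\[
  (\OpN-\lm)^{-1}-(\Opfree-\lm)^{-1} = \wh\gamma(\lm)\,\wh M(\lm)^{-1}\,\wh\gamma(\ov\lm)^*,
\]
which holds for all $\lm\in\dC\setminus\dR_+$ and is precisely $\wh D_1(\lm)$, and to differentiate it $m-1$ times in $\lm$. Since $\frac{\dd^{m-1}}{\dd\lm^{m-1}}(\OpN-\lm)^{-1} = (m-1)!\,(\OpN-\lm)^{-m}$ and similarly for $\Opfree$, the left-hand side becomes $(m-1)!\,\wh D_m(\lm)$. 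The essential structural difference from Theorem~\ref{thm1} is that the middle factor is now the \emph{unbounded} Dirichlet-to-Neumann map $\wh M(\lm)^{-1}$ rather than the bounded operator $(I-\alpha\wt M(\lm))^{-1}$. I would therefore read the three factors as holomorphic families of bounded operators between suitable Sobolev spaces --- $\wh\gamma(\ov\lm)^*\in\cB(L^2(\dR^d),H^{3/2}(\Sigma))$ by Lemma~\ref{lem:mapping}(i), $\wh M(\lm)^{-1}\in\cB(H^{3/2}(\Sigma),H^{1/2}(\Sigma))$ by Lemma~\ref{lem:mapping}(ii), and $\wh\gamma(\lm)\in\cB(H^{1/2}(\Sigma),L^2(\dR^d))$ --- so that the product rule \eqref{rule2} applies and gives
\[
  (m-1)!\,\wh D_m(\lm) = \sum_{\substack{p+q+r=m-1\\ p,q,r\ge0}} \frac{(m-1)!}{p!\,q!\,r!}\,
  \wh\gamma^{(p)}(\lm)\Big(\tfrac{\dd^q}{\dd\lm^q}\wh M(\lm)^{-1}\Big)\tfrac{\dd^r}{\dd\lm^r}\wh\gamma(\ov\lm)^*.
\]

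Next I would establish the trace-class property term by term. The two outer factors are controlled by Lemma~\ref{lem:der2}(i): $\wh\gamma^{(p)}(\lm)\in\sS_{\frac{d-1}{2p+3/2},\infty}$ and $\frac{\dd^r}{\dd\lm^r}\wh\gamma(\ov\lm)^*\in\sS_{\frac{d-1}{2r+3/2},\infty}$. For the middle factor the key point is to pin down its smoothing order: differentiating via \eqref{rule3} writes $\frac{\dd^q}{\dd\lm^q}\wh M(\lm)^{-1}$ as a finite sum of products of $\wh M(\lm)^{-1}$ and derivatives $\wh M^{(j)}(\lm)$, and using that $\wh M^{(j)}(\lm)=j!\,\wh\Gamma_1(\OpN-\lm)^{-j}\wh\gamma(\lm)$ maps $H^s(\Sigma)$ into $H^{s+2j+1}(\Sigma)$ (by the bulk mapping property of $\wh\gamma(\lm)$ from the proof of Lemma~\ref{lem:mapping}(ii), elliptic regularity for $\OpN$, and the trace theorem) together with $\wh M(\lm)^{-1}\colon H^t(\Sigma)\to H^{t-1}(\Sigma)$, one checks that $\frac{\dd^q}{\dd\lm^q}\wh M(\lm)^{-1}$ maps $H^s(\Sigma)$ into $H^{s+2q-1}(\Sigma)$. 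Consequently $\big(\tfrac{\dd^q}{\dd\lm^q}\wh M(\lm)^{-1}\big)\tfrac{\dd^r}{\dd\lm^r}\wh\gamma(\ov\lm)^*$ sends $L^2(\dR^d)$ into $H^{2q+2r+1/2}(\Sigma)$, hence lies in $\sS_{\frac{d-1}{2q+2r+1/2},\infty}$ by Lemma~\ref{le.s_emb}; multiplying by $\wh\gamma^{(p)}(\lm)$ and applying \eqref{prod_Sp} shows each summand belongs to $\sS_{\frac{d-1}{2m},\infty}$, which is contained in the trace class precisely when $m>\frac{d-1}{2}$. The main obstacle is exactly this control of the unbounded middle factor: both the justification of the differentiation and the Schatten bookkeeping hinge on tracking the smoothing order of the derivatives of $\wh M(\lm)^{-1}$ through the Sobolev scale, which is the new ingredient absent from the proof of Theorem~\ref{thm1}.

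Finally I would compute the trace. Since each summand and each of its cyclic rearrangements lies in $\sS_{\frac{d-1}{2m},\infty}$ with $\frac{d-1}{2m}<1$ and is therefore trace class, I may apply $\Tr$ to the finite sum and use \eqref{trace2} to move $\wh\gamma^{(p)}(\lm)$ from the front to the back of each term, turning the summand into $\big(\tfrac{\dd^q}{\dd\lm^q}\wh M(\lm)^{-1}\big)\big(\tfrac{\dd^r}{\dd\lm^r}\wh\gamma(\ov\lm)^*\big)\wh\gamma^{(p)}(\lm)$. With the multinomial weights, the resulting sum is exactly the product rule \eqref{rule2} applied to the three factors $\wh M(\lm)^{-1}$, $\wh\gamma(\ov\lm)^*$, $\wh\gamma(\lm)$ (with derivative orders $q,r,p$), so it equals $\frac{\dd^{m-1}}{\dd\lm^{m-1}}\big(\wh M(\lm)^{-1}\,\wh\gamma(\ov\lm)^*\,\wh\gamma(\lm)\big)$. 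Invoking the identity $\wh\gamma(\ov\lm)^*\wh\gamma(\lm)=\wh M'(\lm)$ from \eqref{der:M} then collapses this to
\[
  (m-1)!\,\Tr\big(\wh D_m(\lm)\big) = \Tr\Big(\tfrac{\dd^{m-1}}{\dd\lm^{m-1}}\big(\wh M(\lm)^{-1}\wh M'(\lm)\big)\Big),
\]
and dividing by $(m-1)!$ yields the claimed formula. I would close by noting that the right-hand side is itself trace class when $m>\frac{d-1}{2}$, since the same Sobolev-range argument shows $\frac{\dd^{m-1}}{\dd\lm^{m-1}}\big(\wh M(\lm)^{-1}\wh M'(\lm)\big)$ maps $L^2(\Sigma)$ into $H^{2m}(\Sigma)$.
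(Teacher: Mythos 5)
Your proposal is correct and reaches the trace formula by the same overall strategy as the paper: differentiate Krein's formula \eqref{eq:Krein1} $m-1$ times, control each summand in a weak Schatten class via Sobolev smoothing and Lemma~\ref{le.s_emb}, use cyclicity of the trace \eqref{trace2}, reassemble with the product rule, and finish with $\wh\gamma(\ov\lm)^*\wh\gamma(\lm)=\wh M'(\lm)$ from \eqref{der:M}. The one genuine difference is how the unbounded middle factor is tamed. The paper bundles it with the right outer factor into $S(\lm)\defeq\wh M(\lm)^{-1}\wh\gamma(\ov\lm)^*$, which is bounded from $L^2(\dR^d)$ to $L^2(\Sigma)$ by the closed graph theorem, and then derives the identity $S'(\lm)=S(\lm)(\Opfree-\lm)^{-1}$ --- using Krein's formula \eqref{eq:Krein1} a second time --- so that all higher derivatives are generated by an induction involving only the two-factor rule \eqref{rule1} among bounded operators on fixed $L^2$ spaces; the exponent $2k+1/2$ for $S^{(k)}(\lm)$ drops out of that induction. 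You instead keep the three factors separate and iterate \eqref{rule3} to show that $\frac{\dd^q}{\dd\lm^q}\wh M(\lm)^{-1}$ maps $H^s(\Sigma)$ into $H^{s+2q-1}(\Sigma)$; your Sobolev and Schatten bookkeeping is consistent and lands on the same class $\sS_{\frac{d-1}{2m},\infty}$, and your closing observation about the range of $\frac{\dd^{m-1}}{\dd\lm^{m-1}}\bigl(\wh M(\lm)^{-1}\wh M'(\lm)\bigr)$ is also correct. What the paper's route buys is that holomorphy and the product rule are only ever invoked in spaces of bounded operators between fixed $L^2$ spaces, where they are already established; your route is more transparent in its order-counting but tacitly requires that $\lm\mapsto\wh M(\lm)^{-1}$ be holomorphic as a $\cB\bigl(H^{t}(\Sigma),H^{t-1}(\Sigma)\bigr)$-valued function for the relevant $t$, so that \eqref{rule3} may be iterated and \eqref{rule2} applied with these intermediate Sobolev spaces. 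That is plausible (and no worse than the paper's own informal first computation of $S'(\lm)$), but a fully rigorous write-up along your lines would have to supply this verification explicitly.
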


\begin{proof}
	The proof is divided into three steps.
	\\[0.5ex]
	\textit{Step 1.}
	Let us introduce the operator-valued function
	\[
		S(\lm) \defeq \wh M(\lm)^{-1}\wh \gamma(\ov\lm)^*, \qquad \lm\in\dC\setminus\dR_+.
	\]
	Note that the product is well defined since by Lemma~\ref{lem:mapping}\,(i)
	\[
		\ran\bigl(\wh\gamma(\ov\lambda)^*\bigr) \subset H^1(\Sigma)
		= \dom\bigl(\wh M(\lm)^{-1}\bigr).
	\]
	The closed graph theorem implies that
	$S(\lm)\in \cB(L^2(\dR^d), L^2(\Sigma))$ for all $\lambda\in\dC\setminus\dR_+$.
	Next we prove the following smoothing property
	for the derivatives of $S$:
	\begin{equation}\label{Sk_smoothing}
		\ran\bigl(S^{(k)}(\lambda)\uhr H^s(\dR^d)\bigr)
		\subset H^{s + 2k + 1/2}(\Sigma),
		\qquad s\ge 0,\, k\in\dN_0,
	\end{equation}
	by induction.
	Since, by Lemma~\ref{lem:mapping}\,(i),
	$\wh\gamma(\ov\lambda)^*$ maps $H^s(\dR^d)$ into $H^{s+3/2}(\Sigma)$
	for all $s \ge 0$
	and $\wh M(\lm)^{-1}$ maps $H^{s+3/2}(\Sigma)$ into $H^{s+1/2}(\Sigma)$
	by Lemma~\ref{lem:mapping}\,(ii), relation~\eqref{Sk_smoothing} is true for $k=0$.
	Now let $l \in \dN_0$ and assume that \eqref{Sk_smoothing} is true for every
	$k=0,1,\dots,l$.
	It follows from \eqref{rule1}, \eqref{rule3}, \eqref{der:g*}, \eqref{der:M}
	and \eqref{eq:Krein1} that for $\lm\in\dC\setminus\dR_+$,
	\begin{align*}
		S'(\lm)
	  	&=
		\frac{\dd}{\dd\lm}
		\big(\wh M(\lm)^{-1}\big)\wh \gamma(\ov\lambda)^*
		+ \wh M(\lm)^{-1}\frac{\dd}{\dd\lambda}\wh\gamma(\ov\lm)^*  \\[0.5ex]
		&= -\wh M(\lm)^{-1}\wh M'(\lm)\wh M(\lm)^{-1}\wh \gamma(\ov\lambda)^* +
		\wh M(\lm)^{-1}\wh\gamma(\ov\lm)^*R_{\rm N}(\lm) \\[0.5ex]
		&= -\wh M(\lm)^{-1}\wh\gamma(\ov\lm)^*\wh\gamma(\lm)\wh M(\lm)^{-1}\wh\gamma(\ov\lm)^*
		+ \wh M(\lm)^{-1}\wh\gamma(\ov\lm)^*R_{\rm N}(\lm) \\[0.5ex]
		&= S(\lm)\big[R_{\rm N}(\lm)-
		\wh\gamma(\lm)\wh M(\lm)^{-1}\wh\gamma(\ov\lm)^*\big]
		= S(\lm)R(\lm).
	\end{align*}
	Hence, with the help of \eqref{rule1} we obtain
	\begin{equation}\label{eq:S(l)}
	\begin{aligned}
		S^{(l+1)}(\lambda)
		&=
		\frac{\dd^l}{\dd\lambda^l}\Bigl(
		S(\lm)R(\lm)\Bigr)
		=
		\sum_{\substack{p+q=l \\ p,q\ge0}}
		\binom{l}{p}	S^{(p)}(\lambda) R^{(q)}(\lm)
		\\[0.5ex]
		&=
		\sum_{\substack{p+q=l \\ p,q\ge0}}
		\frac{l!}{p!}S^{(p)}(\lambda)R(\lm)^{q+1}.
	\end{aligned}
	\end{equation}
	Using the induction hypothesis, formula \eqref{eq:S(l)} and
	smoothing properties of $R(\lm)$, we deduce that, for $p,q\geq 0$, $p+q=l$,
	\begin{align*}
		\ran (S^{(p)}(\lm)R(\lm)^{q+1} \uhr
		H^s(\dR^d)) & \subset
		\ran (S^{(p)}(\lm)\uhr H^{s+2(q+1)}(\dR^d)) \\[0.5ex]
		& \subset H^{s+2(p+q+1) +1/2}(\Sigma)
		=
		H^{s+2(l+1) + 1/2}(\Sigma),
	\end{align*}
	which shows \eqref{Sk_smoothing} for $k = l + 1$ and hence, by induction,
	for all $k\in\dN_0$.
    Therefore, an application of Lemma~\ref{le.s_emb}
    with $\cK = L^2(\Sigma)$ and  $r = 2k+1/2$ yields that
	\begin{equation}\label{Sk_in_Sp}
		S^{(k)}(\lm) \in
		\sS_{\frac{d - 1}{2k+1/2},\infty},
		\qquad k\in\dN_0,\,\lm\in\dC\setminus\dR_+.
	\end{equation}

	\noindent
	\textit{Step 2.}
	Using Krein's formula in \eqref{eq:Krein1}
	and \eqref{rule1} we obtain that,
	for $m\in\dN$ and $\lambda\in\dC\setminus\dR_+$,
	\begin{align}
	  \wh D_m(\lm)
	  &= \frac{1}{(m-1)!}\cdot\frac{\dd^{m-1}}{\dd\lambda^{m-1}}\bigl(\wh D_1(\lm)\bigr) =
		\frac{1}{(m-1)!}\cdot\frac{\dd^{m-1}}{\dd\lambda^{m-1}}\bigl(\wh\gamma(\lambda)S(\lambda)\bigr)\notag\\
	  & = \frac{1}{(m-1)!}\sum_{\substack{p+q=m-1 \\ p,q\ge0}}  \binom{m-1}{p} \wh\gamma^{(p)}(\lambda) S^{(q)}(\lambda).
	    \label{sum578}
	\end{align}
	By Lemma~\ref{lem:der2}\,(i), \eqref{Sk_in_Sp} and \eqref{prod_Sp} we have
	\begin{equation}\label{terms_in_Sp1}
		\wh\gamma^{(p)}(\lambda)S^{(q)}(\lambda)
		  \in \frS_{\frac{d-1}{2p+3/2},\infty}\cdot\frS_{\frac{d-1}{2q+1/2},\infty}
		  = \frS_{\frac{d-1}{2(p+q)+2},\infty} = \frS_{\frac{d-1}{2m},\infty}
	\end{equation}
	for $p,q$ with $p+q=m-1$.
	\medskip

	\noindent
	\textit{Step 3.}
	If $m>\tfrac{d-1}{2}$, then $\frac{d-1}{2m}<1$ and,
	by \eqref{terms_in_Sp1},
	each term in the sum in \eqref{sum578} is a trace class
	operator and, by a similar argument,
	also $S^{(q)}(\lambda)\wh\gamma^{(p)}(\lambda)$.
	Hence, the resolvent power difference $\wh D_m(\lm)$
	is a trace class operator, and
	we can apply the trace to \eqref{sum578}
	and use \eqref{trace1}, \eqref{trace2} and \eqref{der:M} to obtain
	\begin{align*}
		& (m-1)!\Tr\big(\wh D_m(\lm)\big)
		=
		\Tr\Biggl(\,\sum_{\substack{p+q=m-1 \\ p,q\ge0}}\binom{m-1}{p}
		\wh\gamma^{(p)}(\lambda) S^{(q)}(\lambda)\Biggr)
		\\
		&= \sum_{\substack{p+q=m-1 \\ p,q\ge0}}\binom{m-1}{p}
		\Tr\Bigl(\wh\gamma^{(p)}(\lambda) S^{(q)}(\lambda)\Bigr)
		\\
		&= \sum_{\substack{p+q=m-1 \\ p,q\ge0}}\binom{m-1}{p}
		\Tr\Bigl(S^{(q)}(\lambda)\wh\gamma^{(p)}(\lambda)\Bigr)
		\displaybreak[0]\\
		&=
		\Tr\Biggl(\,\sum_{\substack{p+q=m-1 \\ p,q\ge0}}\binom{m-1}{p}
		S^{(q)}(\lambda)\wh\gamma^{(p)}(\lambda)\Biggr)
		\\
		&= \Tr\biggl(\frac{\dd^{m-1}}{\dd\lambda^{m-1}}\Bigl(S(\lambda)
		\wh\gamma(\lambda)\Bigr)\biggr)
		\displaybreak[0]\\
		&= \Tr\biggl(\frac{\dd^{m-1}}{\dd\lambda^{m-1}}
		\Bigl(\wh M(\lambda)^{-1}\wh\gamma(\ov\lambda)^*\wh \gamma(\lambda)\Bigr)\biggr)
		\\
		&=
		\Tr\biggl(\frac{\dd^{m-1}}{\dd\lambda^{m-1}}\Bigl(\wh M(\lambda)^{-1}\wh M'(\lambda)\Bigr)\biggr),
	\end{align*}
	which finishes the proof.
\end{proof}

\begin{proof}[Proof of Theorem~\ref{thm2}]
	(i) The proof of this statement is fully
	analogous to the proof of Theorem~\ref{thm1}.
	One has to replace in the argument
	$\Opfree$, $\alpha$, $\Op$, $\wt M$, $\wt\gamma$,
	by $\OpN$, $\omega$, $\Opp$, $\wh M$, $\wh\gamma$, respectively,
	Moreover, Krein's resolvent formula is used in \eqref{eq:Krein3}
	instead of  Krein's formula in \eqref{eq:Krein2}
	and Lemma~\ref{lem:der2} instead of Lemma~\ref{lem:der1}.

	(ii) By item~(i) of this theorem and by Lemma~\ref{lem:aux},
	for every $m\in\dN$ such that $m > \frac{d-1}{2}$ and every
	$\lm \in \rho(\Opp)$ both operators
	$\wh D_m(\lm)$ and $\wh D_{\omega, m}(\lm)$ belong to the trace class.
	In view of the identity
	$\wh E_{\omega, m}(\lm) = \wh D_m(\lm) + \wh D_{\omega, m}(\lm)$,
	we infer that $\wh E_{\omega, m}(\lm)$ is also in the trace class.
	Using the formula \eqref{trace1} we have
	\[
		\Tr(\wh E_{\omega,m}(\lm))
		=
		\Tr(\wh D_{\omega,m}(\lm)) + \Tr(\wh D_m(\lm)).
	\]
	Combining the trace formula in (i) of this theorem and
	the trace formula in Lemma~\ref{lem:aux} we obtain
	\begin{align*}
		\Tr(\wh E_{\omega,m}(\lm))
		& =
		\frac{1}{(m-1)!}
		\Tr\bigg(\frac{\dd^{m-1}}{\dd \lm^{m-1}}
		\Big(\big(I - \omega\wh M(\lm)\big)^{-1}\omega
		\wh M'(\lm) + \wh M(\lm)^{-1}\wh M'(\lm) \Big) \bigg)\\
		& =
		\frac{1}{(m-1)!}
		\Tr\bigg(\frac{\dd^{m-1}}{\dd \lm^{m-1}}
		\Big(\big(I - \omega\wh M(\lm)\big)^{-1}\wh M(\lm)^{-1}\wh M'(\lm)	
		\bigg),
	\end{align*}
	which finishes the proof.
\end{proof}


\begin{thebibliography}{10}

\bibitem{AGS87}
J.-P.~Antoine, F.~Gesztesy and J.~Shabani,
Exactly solvable models of sphere interactions in quantum mechanics.
\textit{J.\ Phys.\ A} \textbf{20} (1987), 3687--3712.

\bibitem{AM12}
W.~Arendt and R.~Mazzeo,
Friedlander's eigenvalue inequalities and the Dirichlet-to-Neumann semigroup.
\textit{Commun.\ Pure Appl.\ Anal.} \textbf{11} (2012), 2201--2212.

\bibitem{AtE11}
W.~Arendt and A.\,F.\,M.~ter Elst,
The Dirichlet-to-Neumann operator on rough domains.
\textit{J.\ Differential Equations} \textbf{251} (2011), 2100--2124.

\bibitem{AtE15}
W.~Arendt and A.\,F.\,M. ter Elst,
The Dirichlet-to-Neumann operator on exterior domains.
\textit{Potential Anal.} \textbf{43} (2015), 313--340.

\bibitem{BGR82}
C.~Bardos, J.-C.~Guillot and J.~Ralston,
La r\'elation de Poisson pour l'\'equation des ondes dans un ouvert
non borne. Application à la th\'eorie de la diffusion.
\textit{Commun.\ Partial\ Differ.\ Equations} \textbf{7} (1982), 905--958.

\bibitem{BEL14_RMP}
J.~Behrndt, P.~Exner and V.~Lotoreichik,
Schr\"odinger operators with $\delta$- and $\delta'$-interactions on
Lipschitz surfaces and chromatic numbers of associated partitions.
\textit{Rev.\ Math.\ Phys.} \textbf{26} (2014), 1450015, 43~pp.

\bibitem{BGLL15}
J.~Behrndt, G.~Grubb, M.~Langer and V.~Lotoreichik,
Spectral asymptotics for resolvent differences of elliptic operators with $\delta$
and $\delta'$-interactions on hypersurfaces.
\textit{J.\ Spectr.\ Theory} \textbf{5} (2015), 697--729.

\bibitem{BL07}
J.~Behrndt and M.~Langer,
Boundary value problems for elliptic partial differential operators
on bounded domains.
\textit{J.\ Funct.\ Anal.} \textbf{243} (2007), 536--565.

\bibitem{BL12}
J.~Behrndt and M.~Langer,
Elliptic operators, Dirichlet-to-Neumann maps and quasi boundary triples.
In \textit{Operator methods for boundary value problems},
London Math.\ Soc.\ Lecture Note Series 404, Cambridge Univ.\ Press, 2012, 121--160.

\bibitem{BLL13_AHP}
J.~Behrndt, M.~Langer and V.~Lotoreichik,
Schr\"odinger operators with $\delta$ and $\delta'$-potentials
supported on hypersurfaces.
\textit{Ann.\ Henri Poincar\'e} \textbf{14} (2013), 385--423.

\bibitem{BLL13_IEOT}
J.~Behrndt, M.~Langer and V.~Lotoreichik,
Spectral estimates for resolvent differences of self-adjoint elliptic operators.
\textit{Integral Equations Operator Theory} \textbf{77} (2013), 1--37.

\bibitem{BLL13_LMS}
J.~Behrndt, M.~Langer and V.~Lotoreichik,
Trace formulae and singular values of resolvent power differences of
self-adjoint elliptic operators.
\textit{J.\ Lond.\ Math.\ Soc.\ (2)} \textbf{88} (2013), 319--337.

\bibitem{BR15}
J.~Behrndt and J.~Rohleder,
Spectral analysis of selfadjoint elliptic differential operators.
Dirichlet-to-Neumann maps, and abstract Weyl functions.
\textit{Adv.\ Math.} \textbf{285} (2015), 1301--1338.

\bibitem{BEKS94}
J.\,F.~Brasche, P.~Exner, Y.\,A.~Kuperin and P.~{\v{S}}eba,
Schr\"odinger operators with singular interactions.
\textit{J.\ Math.\ Anal.\ Appl.} \textbf{184} (1994), 112--139.

\bibitem{BT92}
J.\,F.~Brasche and A.~Teta,
Spectral analysis and scattering theory for Schr\"odinger operators
with an interaction supported by a regular curve.
In \textit{Ideas and methods in quantum and statistical physics (Oslo, 1988)},
Cambridge Univ.\ Press, Cambridge, 1992, 197--211.

\bibitem{Br76}
V.\,M.~Bruk,
A certain class of boundary value problems with a spectral parameter
in the boundary condition.
\textit{Mat.\ Sb.\ (N.S.)} \textbf{100(142)} (1976), 210--216.

\bibitem{BGP08}
J.~Br\"uning, V.~Geyler and K.~Pankrashkin,
Spectra of self-adjoint extensions and applications to solvable
Schr\"odinger operators.
\textit{Rev.\ Math.\ Phys.} \textbf{20} (2008), 1--70.

\bibitem{BF60}
V.\,S.~Buslaev and L.\,D.~Faddeev,
Formulas for traces for a singular Sturm-Liouville differential operator.
\textit{Sov.\ Math.\ Dokl.} \textbf{1} (1960), 451--454.

\bibitem{Ca02}
G.~Carron,
Determinant relatif et la fonction Xi.
\textit{Amer.\ J.\ Math.} \textbf{124} (2002), 307--352.

\bibitem{CGNZ12}
S.~Clark, F.~Gesztesy, R.~Nichols and M. Zinchenko,
Boundary data maps and Krein's resolvent formula for Sturm--Liouville
operators on a finite interval.
\textit{Oper.\ Matrices} \textbf{8} (2014), 1--71.

\bibitem{DM91}
V.\,A.~Derkach and M.\,M. Malamud,
Generalized resolvents and the boundary value problems for Hermitian
operators with gaps.
\textit{J.\ Funct.\ Anal.} \textbf{95} (1991), 1--95.

\bibitem{DM95}
V.\,A.~Derkach and M.\,M.~Malamud,
The extension theory of Hermitian operators and the moment problem.
\textit{J.\ Math.\ Sci.} \textbf{73} (1995), 141--242.

\bibitem{DEKP15}
J.~Dittrich, P.~Exner, C.~K\"uhn and K.~Pankrashkin,
On eigenvalue asymptotics for strong delta-interactions supported by
surfaces with boundaries.
\textit{Asymptotic Anal.}, to appear.
Preprint: arXiv:1506.06583.

\bibitem{DR14}
V.~Duch\^ene and N.~Raymond,
Spectral asymptotics of a broken $\delta$-interaction.
\textit{J.\ Phys.\ A} \textbf{47} (2014), 155203, 19~pp.

\bibitem{tEO14}
A.\,F.\,M.~ter Elst and E.\,M.~Ouhabaz,
Analysis of the heat kernel of the Dirichlet-to-Neumann operator.
\textit{J.\ Funct.\ Anal.} \textbf{267} (2014), 4066--4109.

\bibitem{E08}
P.~Exner,
Leaky quantum graphs: a review.
In \textit{Analysis on graphs and its applications},
Proc.\ Sympos.\ Pure Math.\ 77,
Amer.\ Math.\ Soc., Providence, RI, 2008, 523--564.

\bibitem{EHL06}
P.~Exner, E.\,M.~Harrell and M.~Loss,
Inequalities for means of chords, with application to isoperimetric problems.
\textit{Lett.\ Math.\ Phys.} \textbf{75} (2006), 225--233.

\bibitem{EI01}
P.~Exner and T.~Ichinose,
Geometrically induced spectrum in curved leaky wires.
\textit{J.\ Phys.\ A} \textbf{34} (2001), 1439--1450.

\bibitem{EJ13}
P.~Exner and M.~Jex,
Spectral asymptotics of a strong $\delta'$ interaction on a planar loop.
\textit{J.\ Phys.\ A} \textbf{46} (2013), 345201, 12~pp.

\bibitem{EK15}
P.~Exner and H.~Kova\v{r}\'{\i}k,
\textit{Quantum waveguides}.
Theoretical and Mathematical Physics, Springer, Cham, 2015.

\bibitem{Fr03}
L.~Friedlander,
Absolute continuity of the spectra of periodic waveguides.
In \textit{Waves in periodic and random media (South Hadley, MA, 2002)},
Contemp.\ Math.\ 339,
Amer.\ Math.\ Soc., Providence, RI, 2003, 37--42.

\bibitem{GZ12}
F.~Gesztesy and M.~Zinchenko,
Symmetrized perturbation determinants and applications to
boundary data maps and Krein-type resolvent formulas.
\textit{Proc.\ Lond.\ Math.\ Soc.\ (3)} \textbf{104} (2012), 577--612.

\bibitem{GK69}
I.\,C.~Gohberg and M.\,G.~Kre\u{\i}n,
\textit{Introduction to the theory of linear nonselfadjoint operators}.
Translations of Mathematical Monographs 18,
Amer.\ Math.\ Soc., Providence, RI, 1969.

\bibitem{G84}
G.~Grubb,
Remarks on trace estimates for exterior boundary problems.
\textit{Commun.\ Partial\ Differ.\ Equations} \textbf{9} (1984), 231--270.

\bibitem{JL15}
M.~Jex and V.~Lotoreichik,
On absence of bound states for weakly attractive $\delta'$-interactions
supported on non-closed curves in $\mathbb{R}^2$.
\textit{J.\ Math.\ Phys.}, to appear.
Preprint: arXiv:1508.04577.

\bibitem{JP51}
R.~Jost and A.~Pais,
On the scattering of a particle by a static potential.
\textit{Physical Rev.\ (2)} \textbf{82} (1951), 840--851.

\bibitem{K}
T.~{Kato},
\textit{Perturbation theory for linear operators}.
Reprint of the 1980 edition.
Classics in Mathematics, Springer-Verlag, Berlin, 1995.

\bibitem{KS03}
R.~Killip and B.~Simon,
Sum rules for Jacobi matrices and their applications to spectral theory.
\textit{Ann.\ Math.\ (2)} \textbf{158} (2003), 253--321.

\bibitem{KS09}
R.~Killip and B.~Simon,
Sum rules and spectral measures of Schr\"{o}dinger operators with $L^2$-potentials.
\textit{Ann.\ Math.\ (2)} \textbf{170}  (2009), 739--782.

\bibitem{Ko75}
A.\,N.~Ko\v{c}ube\u{\i},
Extensions of symmetric operators and of symmetric binary relations.
\textit{Mat.\ Zametki} \textbf{17} (1975), 41--48.

\bibitem{KL14}
S.~Kondej and V.~Lotoreichik,
Weakly coupled bound state of 2-D Schr\"odinger operator with potential-measure.
\textit{J.\ Math.\ Anal.\ Appl.} \textbf{420} (2014), 1416--1438.

\bibitem{LP69}
P.~Lax and R.~Phillips,
Decaying modes for the wave equation in the exterior of an obstacle.
\textit{Commun.\ Pure\ Appl.\ Math.} \textbf{22} (1969), 737--787.

\bibitem{LM72-I}
J.-L.~Lions and E.~Magenes,
\textit{Non-homogeneous boundary value problems and applications. Vol.~I}.
Die Grundlehren der mathematischen Wissenschaften 181,
Springer-Verlag, New York--Heidelberg, 1972.

\bibitem{LO15}
V.~Lotoreichik and T.~Ourmi\`eres-Bonafos,
On the bound states of Schr\"odinger operators with
$\delta$-interactions on conical surfaces.
Preprint: arXiv:1510.05623.

\bibitem{McL}
W.~McLean,
\textit{Strongly elliptic systems and boundary integral equations}.
Cambridge University Press, Cambridge, 2000.

\bibitem{M88}
R.~Melrose,
Weyl asymptotics for the phase in obstacle scattering.
\textit{Commun.\ Partial\ Differ.\ Equations} \textbf{13} (1988), 1431--1439.

\bibitem{S05}
B.~Simon,
\textit{Trace ideals and their applications}. Second edition.
Mathematical Surveys and Monographs 120,
Amer.\ Math.\ Soc.\, Providence, RI, 2005.

\bibitem{St}
O.~Steinbach,
\textit{Numerical approximation methods for elliptic boundary value problems.
Finite and boundary elements}.
Springer, New York, 2008.

\end{thebibliography}
\end{document}